\newtheorem{pro}{Proposition}[section]
\newtheorem{thm}[pro]{Theorem}
\newtheorem{lem}[pro]{Lemma}
\newtheorem{clm}[pro]{Claim}
\newtheorem{cor}[pro]{Corollary}
\newtheorem{quest}[pro]{Question}
\theoremstyle{definition}
\newtheorem{dfn}[pro]{Definition}
\newcommand{\CC}{\mathcal C}
\newcommand{\bdy}{\partial}
\newcommand{\EE}{\mathcal E}
\newcommand{\DD}{\mathcal D}
\newcommand{\TT}{\mathcal T}
\newcommand{\lt}{{\rm left}}
\newcommand{\rt}{{\rm right}}
\newcommand{\plex}[1]{\ensuremath{[{#1}]}}
\title{Normalizing Topologically Minimal Surfaces III: Bounded Combinatorics}
\date{\today}
\author{David Bachman}
\thanks{Partially supported by NSF grant DMS-1207804.}
\begin{document}

\begin{abstract}
We show that there are a finite number of possible pictures for a surface in a tetrahedron with local index $n$. Combined with previous results, this establishes that any topologically minimal surface can be transformed into one with a particular normal form with respect to any triangulation.
\end{abstract}
\maketitle

\markright{NORMALIZING TOPOLOGICALLY MINIMAL SURFACES III}

\section{Introduction}

A surface in a triangulated 3-manifold is {\it normal} if it intersects each tetrahedron transversally in a collection of flat triangles and quadrilaterals. H. Kneser introduced normal surfaces to show that the prime decomposition of a 3-manifold is finite \cite{kneser:29}. Many years later, W. Haken used normal surfaces to give the first algorithm to determine if a knot is the unknot, and to show that any collection of pairwise disjoint, non-parallel incompressible surfaces is finite \cite{haken:61} (see also \cite{finite}). The key to this latter proof was to show that any incompressible surface can be isotoped to a normal surface. 

In 1993, Rubinstein introduced the idea of an {\it almost normal} surface. Such a surface meets each tetrahedron of a triangulated 3-manifold in a collection of normal disks (triangles or quadrilaterals), with the single exception of a disk whose boundary meets the 1-skeleton eight times (i.e. an octagon) or two normal disks connected by a single unknotted tube. Rubinstein \cite{rubinstein:95} and Thompson \cite{thompson:94} used almost normal surfaces to produce an algorithm to determine if a given 3-manifold is $S^3$. Subsequently Rubinstein \cite{rubinstein:93} and Stocking \cite{stocking:96} used almost normal surfaces to produce an algorithm to determine the Heegaard genus of a given 3-manifold. More recently, Li used almost normal surfaces to prove the generalized Waldhausen conjecture \cite{li:07}: any closed, atoroidal 3-manifold admits finitely many Heegaard splittings of each genus. The key to these results is to show that any {\it strongly irreducible} \cite{cg:87} Heegaard splitting is isotopic to an almost normal surface. 

These results illustrate the idea that a ``normal form" for a particular class of surfaces with respect to an arbitrary triangulation enables proofs of both algorithmic and finiteness results. In \cite{TopIndexI}, the author introduced the idea of {\it topologically minimal surfaces} as a way of generalizing both incompressible and strongly irreducible surfaces. Such surfaces have an {\it index}, with index 0 surfaces being incompressible and index 1 surfaces being strongly irreducible. Given the aforementioned results, the obvious question is, ``Can every topologically minimal surface be isotoped to some normal form?" Resolving this question is the purpose of the present series of papers. 

In the first paper in this series \cite{TopMinNormalI} we reduced the problem to a local one. In that work, we showed that a topologically minimal surface with index $n$ in a triangulated 3-manifold can be transformed\footnote{Such a transformation may involve isotopy, $\bdy$-compression, and some compressions. Compressions that arise are very constrained, only happen when $n$ is at least three, and reduce the index.} to one that meets each tetrahedron in a surface with a well-defined {\it local index}. Furthermore, the sum of these local indices, taken over all tetrahedra, will be at most $n$. What remains, then, is to classify all surfaces in a tetrahedron with well-defined local index.

In the second paper in this series \cite{TopMinNormalII} we obtained a complete classification of simply connected surfaces in a tetrahedron with local index $n$. There we showed that such a surface must look like a helicoid, and the boundary of such a surface will meet the 1-skeleton precisely $4(n+1)$ times. This result bears a striking resemblance with Colding and Minicozzi's classification of the local picture of a geometrically minimal surface that is simply connected \cite{cm1}, \cite{cm2}, \cite{cm3}, \cite{cm4}. 

In this final paper, we study the general case of surfaces in a tetrahedron $\Delta$ with local index $n$. Our goal here is to show that there is a finite number of possibilities for such a surface. The results of this paper can all be summarized in the following theorem.

\begin{thm}
\label{t:summary}
Let $H$ be a surface in a tetrahedron $\Delta$ with local index $n$. Then 
	\begin{enumerate}
		\item $H$ is unknotted and the components of $H$ are not nested.
		\item $\sum 1-\chi(H_i) \le n,$ where the sum is taken over all components $H_i$ of $H$.
		\item Each component of $H$ has well-defined local index, and the sum of these indices is precisely $n$. 
		\item If $H'$ is a component of $H$ then $|\bdy H'| \le 4(n'+1),$  where $n'$ is the local index of $H'$. 
	\end{enumerate}
\end{thm}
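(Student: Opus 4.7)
The overall strategy is to deduce the four statements in order. Part (1) sets up the structural picture, while parts (2)--(4) are bounded-complexity statements that I would reduce to single-component inequalities via an additivity property of the disk complex under disjoint union, then combine with the classification of simply connected local-index-$n$ surfaces from paper II.

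To prove (1) I would use that $\Delta$ is a $3$-ball, so each component of $H$ is properly embedded in a ball. Any local knotting in a component could be removed by an ambient isotopy supported in a small ball disjoint from the rest of $H$; such an isotopy does not alter the disk complex but strictly simplifies the data used to define local index, contradicting the hypothesis. For non-nesting I would take an innermost pair of nested components: the region they cobound in $\Delta$ is either a product, giving a simplifying isotopy, or contains an essential compressing disk, giving a reduction that lowers the local index. Both outcomes contradict local minimality.

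For (3) I would show that compressing and $\bdy$-compressing disks for distinct components of $H$ can be chosen disjoint, so $\DD(H)$ decomposes as a join $\DD(H_1) * \DD(H_2) * \cdots$. Topological index is additive under join, since $S^{a-1} * S^{b-1} = S^{a+b-1}$, so each component has a well-defined local index $n_i'$ with $\sum n_i' = n$. Part (2) then reduces to the single-component estimate $1 - \chi(H') \le n'$, which I would establish by induction on $n'$: compress $H'$ along a disk representing a non-trivial class in $\DD(H')$ and track the simultaneous drops in $1-\chi$ (which decreases by one per compression) and in local index. For (4) I would perform a maximal sequence of $c$ compressions on a component $H'$ of local index $n'$, producing $k \le c+1$ simply connected pieces with local indices $m_i$. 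Interior compressions preserve $|\bdy H'|$, and paper II supplies $|\bdy D_i| = 4(m_i+1)$ for each piece. If each compression can be shown to reduce the total index by exactly one, so that $\sum m_i = n' - c$, then $|\bdy H'| = 4(\sum m_i + k) = 4(n' - c + k) \le 4(n' + 1)$, as required.

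The step I expect to be the main obstacle is this interplay between compressions and index drop in (2) and (4), specifically the claim that one may choose a sequence of compressions in which each step decreases the local index by exactly one. The disks provided by non-trivial classes in $\DD(H')$ need not all behave this way, and one must show either that such unit-cost compressions are always available or that every maximal sequence has the right total cost. This is also where $\bdy$-compressions will likely enter, since they offer a complementary means of reducing complexity when an ordinary compressing disk of the right type is not available. A secondary difficulty is the non-nesting step of (1), which requires care in analyzing the topology of the region cobounded by an innermost nested pair so that a genuine isotopy or index-reducing disk is produced in every case.
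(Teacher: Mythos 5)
The central gap is in your argument for (3). You claim that $[\CC\EE(H)]$ decomposes as a join $[\CC\EE(H_1)] * [\CC\EE(H_2)] * \cdots$ over all components of $H$. This join decomposition only holds when the components are \emph{normally separated}, i.e., separated by a normal triangle or quadrilateral: in that case any disk for $H_1$ and any disk for $H_2$ can be isotoped into disjoint balls on either side of the normal separating surface, and hence span an edge in the complex. Without normal separation this fails; for instance, two nearby parallel octagons have compressing and edge-compressing disks confined to the product region between them, which cannot be made pairwise disjoint. The paper proves the join decomposition only for \emph{normal components} (the unnumbered lemma in Section 2), and then devotes Sections 3--5 to the hard part you have skipped: showing that a normally connected surface with well-defined local index is actually connected. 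That is established through Lemmas \ref{l:SameComponent}, \ref{l:NoLongConnected}, \ref{l:OnlyLongConnected}, and the sunburst machinery of Section 5.

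There are secondary problems as well. For (1), an ambient isotopy cannot remove knotting by definition, so your ``local knotting removed by isotopy'' step is vacuous, and the non-nesting step is under-justified because the region cobounded by a nested pair need not be a product nor contain a compressing disk that demonstrably lowers the index. The paper derives (1) and (2) together from Theorem \ref{t:EdgeCompressions}, which says $H$ is carried to a collection of disks by at most $n$ pairwise disjoint edge-compressions; edge-compressing to disks forces $H$ to be unknotted and unnested, and since each edge-compression decreases $\sum(1-\chi)$ by at most $1$, this gives (2) as well. For your inductive approach to (2), note that a non-separating compression drops $1-\chi$ by $2$, not $1$, and you have no control over how the local index changes under a single compression. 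For (4) you correctly flag the obstacle---showing each compression lowers the index by exactly one---but the paper sidesteps it: for boundary curves of length at least $8$ it bounds $|\bdy H|$ directly via a counting argument on edge-compressing disks incident to the long edge (Theorem \ref{t:NoLong}), and for triangular boundary components it uses the sunburst argument of Section 5 (Theorem \ref{t:TriangleCompress}) to account index loss against the number of triangles removed, which is weaker than but sufficient in place of a one-per-compression bound.
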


The results of this theorem imply that there are a finite number of possibilities for a surface in a tetrahedron with local index $n$. For example, when $n=0$ the theorem implies that the surface $H$ must be a collection of disks with boundary length 3 or 4. This recovers the proof that any incompressible surface can be isotoped to a normal surface. When $n=1$ the above result implies that there is at most one component of $H$ that is not a normal triangle or quadrilateral. This component can either be a disk or an annulus, and the total length of its boundary is at most 8. Hence, the exceptional component is either a disk with boundary length 8, or an unknotted annulus connecting two loops that have length 3 or 4. Taken with the results from the prequels, this recovers Rubinstein and Stocking's result that any strongly irreducible surface can be isotoped to an almost normal one. 

The index 2 case is of particular interest, as it has immediate consequences that were not previously known. Theorem \ref{t:summary} implies that a (connected) surface in a tetrahedron with local index $2$ is either:
	\begin{enumerate}
		\item A disk with boundary of length 12 (i.e.~a 12-gon). 
		\item Three normal disks, connected by two unknotted tubes, to form a pair-of-pants.
		\item An octagon connected to a normal triangle by an unknotted tube. 
		\item An octagon connected to itself by an unknotted tube to form a punctured torus. 
		\item A 12-gon connected to itself by an unknotted tube to form a punctured torus. 
	\end{enumerate}
	
Although Theorem \ref{t:summary} does not rule out this last surface as a possible index 2 picture, we conjecture that it is not. This illustrates an important point: Theorem \ref{t:summary} only gives a finite set of surfaces that contains all possible ones that have local index $n$. However, it is likely the case that not all of the surfaces in this set will have the desired index. Despite this, the theorem does establish that for each local index the possibilities are finite, which is what is necessary for applications. For example taken with Theorem 1.2 of \cite{TopMinNormalI} and Lemma 2.10 of \cite{JSJ}, we obtain the following result:

\begin{cor}
\label{c:FiniteSlopes}
Let $M$ be a compact, irreducible 3-manifold with incompressible, connected, toroidal boundary. Then there is a finite set of slopes $\Omega$ on $\bdy M$ such that the boundary slope of any non-peripheral, topologically minimal, index $n$ surface in $M$ is at a distance of at most $n$ (as measured in the Farey graph) from $\Omega$.
\end{cor}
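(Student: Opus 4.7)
The plan is to assemble the three quoted ingredients in sequence: Theorem 1.2 of \cite{TopMinNormalI}, Theorem \ref{t:summary} above, and Lemma 2.10 of \cite{JSJ}. First I would fix a triangulation $\TT$ of $M$ in which the boundary torus $\bdy M$ appears as a subcomplex. The finitely many normal arc types on the triangles of $\bdy M$ assemble into only finitely many normal curves up to isotopy on the torus, and hence realize only finitely many slopes; let $\Omega$ be this finite set of ``normal slopes.''

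Now let $H\subset M$ be non-peripheral, topologically minimal, of index $n$. By Theorem 1.2 of \cite{TopMinNormalI} I may transform $H$ (without raising the index) so that for each tetrahedron $\Delta$ of $\TT$ the piece $H\cap \Delta$ carries a well-defined local index $n_\Delta$, with $\sum_\Delta n_\Delta \le n$. Theorem \ref{t:summary} then says each $H\cap \Delta$ lies in an explicit finite list of possibilities; in particular, for $n_\Delta = 0$ the piece is a union of normal triangles and quadrilaterals, whose arcs on $\bdy \Delta\cap\bdy M$ are normal, while for $n_\Delta > 0$ the boundary of each component meets the 1-skeleton of $\Delta$ at most $4(n_\Delta+1)$ times.

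Having reduced to this local form, I would remove the non-normal content one tetrahedron at a time. For each $\Delta$ with $n_\Delta > 0$, Lemma 2.10 of \cite{JSJ} allows $H\cap\Delta$ to be replaced by a normal collection of disks, and comes with a quantitative estimate that the boundary slope on $\bdy M$ shifts by Farey distance at most $n_\Delta$. Iterating over all tetrahedra transforms $H$ into a surface with normal boundary, whose slope therefore lies in $\Omega$, and the original slope lies at Farey distance at most $\sum_\Delta n_\Delta \le n$ from $\Omega$.

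The main obstacle is the quantitative slope bound per tetrahedron provided by Lemma 2.10: one must verify that the pieces enumerated by Theorem \ref{t:summary} fall within its hypotheses, and that the Farey contribution of a single tetrahedron is controlled \emph{linearly} by $n_\Delta$ rather than by some coarser function like $4(n_\Delta+1)$. A secondary subtlety is choosing the transformation of Theorem 1.2 of \cite{TopMinNormalI} carefully enough that the transformed surface's boundary is isotopic on $\bdy M$ to the original $\bdy H$, so that we end up bounding the Farey distance of the correct slope.
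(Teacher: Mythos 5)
Your outline has the right three ingredients but assembles them incorrectly in two places, and the first one is a genuine error. You define $\Omega$ as the set of slopes realized by normal curves on $\bdy M$ and assert this is finite because there are finitely many normal arc types. That is false: finitely many arc types assemble into infinitely many normal curves (with arbitrarily many arcs crossing each edge), and in fact \emph{every} slope on the torus is realized by a normal curve, so your $\Omega$ would be all slopes. The finiteness in the paper's argument comes instead from Theorem \ref{t:summary}(4): after the transformation of Theorem 1.2 of \cite{TopMinNormalI}, each component of $H' \cap \bdy \Delta_i$ has length at most $4(n+1)$, so $H'$ lies in one of finitely many \emph{compatibility classes} in the sense of Definition 2.5 of \cite{JSJ}, and Lemma 2.10 of \cite{JSJ} says each compatibility class determines a \emph{unique} boundary slope. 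That is the finite set $\Omega$. Note the role of Lemma 2.10 here: it is a rigidity statement (one class, one slope), not the quantitative ``replace $H\cap\Delta$ by normal disks at Farey cost $n_\Delta$'' statement you attribute to it; no such per-tetrahedron slope estimate exists or is needed.

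The second issue is where the distance-$n$ allowance comes from. You try to extract it from the (nonexistent) per-tetrahedron normalization cost, and you flag as a ``subtlety'' the need to choose the Theorem 1.2 transformation so that it preserves the boundary slope. It is the opposite: the transformation of Theorem 1.2 of \cite{TopMinNormalI} involves compressions and $\bdy$-compressions and does \emph{not} preserve the slope; Remark 2.18 of that paper bounds the resulting slope change by $n$ in the Farey graph, and this single step is the entire source of the distance-$n$ bound in the corollary. After the transformation, $\bdy H'$ lies exactly in $\Omega$, and $\bdy H$ is within distance $n$ of it.
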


\begin{proof}
Choose any triangulation $\TT$ of $M$. Let $H$ be a topologically minimal surface of index $n$ in $M$. By Theorem 1.2 of \cite{TopMinNormalI}, if $H$ is not peripheral then we can isotope, compress and $\bdy$-compress $H$ to a surface $H'$ such that the local indices of $H' \cap \Delta_i$ sum to at most $n$, where $\{\Delta_i\}$ are the tetrahedra of $\TT$. Furthermore, the slope of $\bdy H'$ will be at most distance $n$ (as measured in the Farey graph) from the slope of $\bdy H$ (see Remark 2.18 of \cite{TopMinNormalI}). By Theorem \ref{t:summary}, for each $i$, each component of the surface $H' \cap \bdy \Delta_i$ is of bounded length, and thus $H'$ is in one of a finite number of {\it compatibility classes} (see Definition 2.5 of \cite{JSJ}). Lemma 2.10 in \cite{JSJ} implies that each such compatibility class determines a unique slope on $\bdy M$ (see in particular the paragraph following the proof of this lemma).
\end{proof}

This result, together with Theorem 4.9 of \cite{TopIndexI} then implies the following, which says roughly that if manifolds $X$ and $Y$ with toroidal boundary are glued together by a ``sufficiently complicated" map, then no new index $n$ surfaces are created. 

\begin{cor}
\label{c:Barrier}
Let $X$ and $Y$ be compact, irreducible 3-manifolds with incompressible, connected, toroidal boundary. Let $\phi:\bdy X \to \bdy Y$ be an homeomorphism, and $\psi:\bdy Y \to \bdy Y$ be any Anosov map. Then there is a value $m_0$ depending only on $n$ such that for all $m \ge m_0$, any topologically minimal, index $n$ surface in the manifold $X \cup _{\psi^m \circ \phi} Y$ is isotopic into $X$ or $Y$. 
\end{cor}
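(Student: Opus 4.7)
The plan is to argue by contradiction: assume $H$ is a topologically minimal, index $n$ surface in $M_m := X \cup_{\psi^m \circ \phi} Y$ that is not isotopic into $X$ or $Y$, and show that the boundary slopes of the two halves of $H$ cut out by the gluing torus $T$ become incompatible with $\psi^m$ once $m$ is large. First I would invoke Theorem 4.9 of \cite{TopIndexI} applied to the incompressible torus $T$, isotoping $H$ so that $H \cap T$ consists of curves essential in both $H$ and $T$, and so that the pieces $H_X := H \cap X$ and $H_Y := H \cap Y$ are each topologically minimal with local indices $n_X, n_Y$ satisfying $n_X + n_Y \le n$. A standard cleanup (push any $\bdy$-parallel component of $H_X$ or $H_Y$ across $T$) lets me assume that neither $H_X$ nor $H_Y$ has a peripheral component; since $H$ is not isotopic into one side, $H \cap T$ remains nonempty after this cleanup.

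Because any collection of pairwise disjoint essential simple closed curves on a torus is mutually parallel, $\bdy H_X$ determines a single slope $\alpha$ on $\bdy X$, and $\bdy H_Y$ determines the slope $\psi^m \circ \phi(\alpha)$ on $\bdy Y$. I then apply Corollary \ref{c:FiniteSlopes} separately to $X$ and to $Y$, obtaining finite slope sets $\Omega_X$ on $\bdy X$ and $\Omega_Y$ on $\bdy Y$ and vertices $\omega_X \in \Omega_X$, $\omega_Y \in \Omega_Y$ with
\[
d(\alpha,\omega_X) \le n_X \quad \text{and} \quad d\bigl(\psi^m \circ \phi(\alpha),\,\omega_Y\bigr) \le n_Y,
\]
where $d$ denotes distance in the Farey graph of the torus. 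Since $\psi^m \circ \phi$ descends to an isometry of the Farey graph, applying it to the first inequality and combining via the triangle inequality with the second yields
\[
d\bigl(\psi^m \circ \phi(\omega_X),\,\omega_Y\bigr) \le n_X + n_Y \le n.
\]

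At this point the argument is reduced to classical Farey-graph geometry. Because $\psi$ is Anosov, it acts on the Farey graph (quasi-isometric to $\mathbb{H}^2$) as a loxodromic isometry with positive translation length, whose two axial fixed points are irrational and hence distinct from every rational vertex. Consequently, for any rational vertex $p$ and any rational vertex $q$ one has $d(\psi^m(p),q) \to \infty$ linearly in $|m|$. Applying this to each of the finitely many pairs $(\omega_X,\omega_Y) \in \Omega_X \times \Omega_Y$ produces a finite set of thresholds $m(\omega_X,\omega_Y)$ beyond which $d(\psi^m \circ \phi(\omega_X),\omega_Y) > n$; letting $m_0$ exceed the maximum of these thresholds contradicts the displayed inequality for all $m \ge m_0$.

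The main obstacle is the first paragraph: verifying that Theorem 4.9 of \cite{TopIndexI} can be applied to produce an essential intersection with $T$ together with local-index pieces whose indices sum to at most $n$, and then dealing with peripheral components so that Corollary \ref{c:FiniteSlopes} genuinely applies to $H_X$ and $H_Y$. Once this is in place, the slope bookkeeping and the Anosov-dynamics step are essentially automatic.
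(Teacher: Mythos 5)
Your proposal follows essentially the same argument as the paper: bound the boundary slopes of the two halves of $H$ via Corollary~\ref{c:FiniteSlopes}, transfer across the gluing torus, and contradict the growth of Farey-graph distance under iterates of the Anosov map. The one technical inaccuracy is in how Theorem 4.9 of \cite{TopIndexI} is applied. That theorem does not isotope $H$ to meet $T$ transversally in essential curves; it isotopes $H$ to meet $T$ transversally \emph{away from} $p$ saddle tangencies, and bounds by $n-p$ the sum of indices of the components of $H - N(T)$. So one should take $H_X$ and $H_Y$ to be individual components of $H - N(T)$ rather than $H\cap X$ and $H\cap Y$, and $\bdy H_X$, $\bdy H_Y$ are not literally identified by the gluing map: they differ by the $p$ saddle resolutions. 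The paper's repair is to observe that the two curve collections are nonetheless disjoint on the torus $T$, hence carry the same slope; your phrase ``$\bdy H_Y$ determines the slope $\psi^m\circ\phi(\alpha)$'' glosses over this step. With that adjustment the rest of your argument goes through, and in fact improves slightly on the paper: you use $n_X+n_Y\le n$ to get a bound of $n$ where the paper uses the looser $2n$ (each index individually $\le n$), you spell out the Anosov/loxodromic growth in the Farey graph that the paper leaves implicit in ``choose $m_0$ so that\dots,'' and you correctly flag the need to confirm the components fed into Corollary~\ref{c:FiniteSlopes} are non-peripheral (a point the paper also does not make explicit). One small cosmetic correction: the Farey graph is Gromov hyperbolic but not quasi-isometric to $\mathbb{H}^2$ (it has infinite-valence vertices and is quasi-isometric to a tree); the loxodromic/irrational-fixed-point argument is unaffected.
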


\begin{proof}
Let $\Omega_X$ and $\Omega_Y$ be the finite sets of slopes on $\bdy X$ and $\bdy Y$ guaranteed by Corollary \ref{c:FiniteSlopes}, so that if $H_X \subset X$ and $H_Y \subset Y$ are surfaces with topological index $n$, then their boundaries are at most distance $n$ from $\Omega_X$ and $\Omega_Y$, respectively. Choose $m_0$ so that if  $m \ge m_0$, then the set $\Omega_Y$ is at least distance $2n+1$ from the set $\psi^m \circ \phi(\Omega_X)$ in the Farey graph of $\bdy Y$. 

Now suppose $H$ is an index $n$ surface in $M=X \cup _{\psi^m \circ \phi} Y$ that is not isotopic into either $X$ or $Y$. Let $T$ be the image of $\bdy Y$ in this manifold. By Theorem 4.9 of \cite{TopIndexI}, $H$ may be isotoped to meet $T$ transversally away from $p$ saddle points, so that the sum of the indices of the components of $H-N(T)$ is at most $n-p$. In particular, there are components $H_X \subset X$ and $H_Y \subset Y$ of $H-N(T)$ whose indices are at most $n$. Furthermore, $\bdy H_X$ and $\bdy H_Y$ are essential on $T$, and come from $H \cap T$ by resolving $p$ saddle points. It follows that these loops are disjoint, and since $T$ is a torus, must actually be the same. 

We now apply Corollary \ref{c:FiniteSlopes}, which says that $\bdy H_X$ is at a distance of at most $n$ from $\Omega_X$, and $\bdy H_Y$ is distance at most $n$ from $\Omega_Y$. Since $\psi^m \circ \phi(\bdy H_X)=\bdy H_Y$, we conclude the distance between $\Omega_Y$ and $\psi^m \circ \phi(\Omega_X)$ is at most $2n$, contradicting our choice of $m_0$. 
\end{proof}

In Corollary 11.2 of \cite{Stabilizing} we constructed the first example of a manifold that contains a non-minimal genus, unstabilized, weakly reducible Heegaard splitting. By using Corollary \ref{c:Barrier}, we can improve this construction to show that there is a manifold that contains an infinite number of such splittings. 

\begin{cor}
There is a closed 3-manifold $M$ that contains infinitely many non-isotopic, unstabilized Heegaard splittings that are weakly reducible (i.e.~splittings whose Hempel distance is precisely one). 
\end{cor}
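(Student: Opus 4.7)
The approach builds on Corollary 11.2 of \cite{Stabilizing}, which produced a single non-minimal-genus, unstabilized, weakly reducible Heegaard splitting in a manifold of the form $M_0 = X \cup_{\phi_0} Y$ glued along an incompressible torus $T$. To upgrade to infinitely many such splittings in a single manifold, I would replace $\phi_0$ by $\psi^m \circ \phi$ where $\psi$ is an Anosov homeomorphism of $\bdy Y$ and $m$ is a large integer, so that Corollary \ref{c:Barrier} applies. Further, I would arrange $X$ to admit an infinite family $F_1,F_2,\ldots$ of unstabilized Heegaard splittings of strictly increasing genera, and fix one Heegaard splitting $G$ of $Y$; manifolds with such infinite families of unstabilized splittings are known from prior work in the Heegaard splitting theory of 3-manifolds with boundary.

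For each $i$, amalgamate $F_i$ with $G$ across $T$ to obtain a Heegaard splitting $H_i$ of $M = X \cup_{\psi^m \circ \phi} Y$. Each $H_i$ is weakly reducible by the standard structure of amalgamations, since the cores of the two amalgamation tubes on opposite sides of $T$ provide a disjoint pair of compressing disks on opposite sides of $H_i$. Since the genera of the $H_i$ strictly increase with $i$, these splittings are pairwise non-isotopic.

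The key step is to show each $H_i$ is unstabilized. I would argue by contradiction: if some $H_i$ destabilized to a Heegaard splitting $H_i'$ of strictly smaller genus, then $H_i'$ would be a closed topologically minimal surface of some well-defined index $n_i$. Invoking Corollary \ref{c:Barrier} with $m \ge m_0(n_i)$ would force $H_i'$ to be isotopic into $X$ or $Y$, which is impossible because the two handlebody components bounded by a Heegaard surface of $M$ must each meet both sides of $T$.

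The principal obstacle is making this argument work with a single value of $m$ uniformly in $i$: since the genera of the $H_i$ are unbounded, the associated indices $n_i$ may also be unbounded, while Corollary \ref{c:Barrier} requires a fixed $m_0$ per bounded value of $n$. I anticipate overcoming this by showing that any destabilization of $H_i$ inherits from the amalgamation a weak reducing structure whose complexity does not grow with $i$, yielding a uniform bound on the index $n_i$; alternatively, one could fix $m$ at the outset and pass to a subfamily of $F_i$ whose amalgamations remain within the allowable index range. Either route reduces the problem to a single application of Corollary \ref{c:Barrier} and completes the construction.
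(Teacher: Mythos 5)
You have the right skeleton and, to your credit, you have put your finger on exactly the issue the paper must resolve: a single gluing exponent $m$ has to work simultaneously for all the splittings in the infinite family. But you do not actually close this gap, and the two workarounds you sketch are not the route the paper takes, nor do they obviously succeed.

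Your Route (b) --- fix $m$ and pass to a subfamily of $F_i$ within an allowable index range --- does not give infinitely many splittings unless that subfamily is infinite, which is precisely what you would need to prove. Your Route (a) --- show that the index of a destabilization of $H_i$ is uniformly bounded in $i$ --- is also not justified: a stabilized $H_i$ destabilizes to some unstabilized Heegaard surface $H_i'$, and while unstabilized splittings are topologically minimal, there is no a priori bound on the resulting index as the genus grows. Moreover, your contradiction step quietly assumes every intermediate destabilization is itself topologically minimal of a well-defined index, which is not automatic.

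The paper resolves the uniformity problem in a different and cleaner way. The relevant hypothesis in Theorem 11.1 of \cite{Stabilizing} is not that the interface torus $T$ blocks topologically minimal surfaces of arbitrary index; it is that $T$ is a \emph{$g$-barrier surface}, and in the parlance of \cite{Stabilizing} this only requires $T$ to obstruct surfaces of index $0$, $1$, and $2$ --- independent of $g$. Corollary \ref{c:Barrier} with $n \le 2$ therefore gives a single $m_0$ (take the max over $n = 0,1,2$) that makes $T$ a $g$-barrier surface for \emph{every} $g$ simultaneously. With that in hand, the conclusion of Theorem 11.1 of \cite{Stabilizing}, and hence the construction in Corollary 11.2 of \cite{Stabilizing}, runs unchanged for every genus, producing the infinite family. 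So the fix is not to bound the indices $n_i$ of your destabilizations at all; it is to observe that the barrier property the Stabilizing machinery actually uses is a fixed, low-index condition. You should consult the definition of $g$-barrier surface and the proof of Theorem 11.1 in \cite{Stabilizing} to see that only indices $0$, $1$, $2$ enter.
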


\begin{proof}
In the parlance of \cite{Stabilizing}, Corollary \ref{c:Barrier} establishes that if $X$ and $Y$ are manifolds with toroidal boundary that are glued by a sufficiently complicated homeomorphism, then the torus $T$ at their interface is a ``$g$-barrier surface" for all index 0, 1, and 2 surfaces, regardless of the genus $g$. Thus, the conclusion of Theorem 11.1 of \cite{Stabilizing} holds, regardless of the genus of the surface $H_i$. The construction given in Corollary 11.2 of \cite{Stabilizing} then works for every genus. 
\end{proof}

\noindent {\bf Acknowledgements.} The author is immensely grateful to Ryan Derby-Talbot and Eric Sedgwick for invaluable conversations during the preparation of this work.

\section{Topologically Minimal Surfaces in Tetrahedra}
\label{s:TopMinDefinitions}

Throughout this paper $H$ will represent a properly embedded surface in a tetrahedron $\Delta$. The 1- and 2-skeleton of $\Delta$ will be denoted $\TT^1$ and $\TT^2$, respectively. 

\begin{dfn}
A compressing disk for $H$ is a disk $C$ such that $C \cap H$ is an essential loop on $H$. We will use the notation $\CC(H)$ to refer to the set of compressing disks for $H$. 
\end{dfn}

\begin{dfn}
An edge-compressing disk for $H$ is a disk $E$ such that $\bdy E=\alpha \cup \beta$, where $E \cap H=\alpha$ and $E \cap \TT^1 =\beta$ is a subarc of an edge of $\TT^1$. We will use the notation $\EE(H)$ to refer to the set of edge-compressing disks for $H$. 
\end{dfn}

\begin{dfn}
We let $\CC \EE(H)=\CC(H) \cup \EE(H)$. 
\end{dfn}

\begin{dfn}
\label{d:H/D}
For any disk $D \in \CC \EE(H)$, we define $H/D$ to be the surface obtained from $H$ as follows.  Let $N(D)$ denote a submanifold of $\Delta$ homeomorphic to $D \times I$, so that $D=D \times  \{\frac{1}{2}\}$, $N(D) \cap H = (D \cap H) \times I$, and  $N(D) \cap \bdy \Delta = (D \cap \bdy \Delta) \times I$. Then $H/D$ is obtained from $H$ by removing $N(D) \cap H$ and replacing it with $D \times \bdy I$. The two subdisks  $D \times \bdy I$ in $H/D$ will be called the {\it scars} of $D$. When $\mathcal D$ is a collection of pairwise disjoint disks in $\CC \EE(H)$, then $H/\DD$ will denote the result of simultaneously performing the above operation on each individual disk in $\mathcal D$. 
\end{dfn}

We define an equivalence relation as follows:

\begin{dfn}
Disks $D,D' \in \CC \EE(H)$ are {\it equivalent}, $D \sim D'$, if $D$ and $D'$ are isotopic through disks in $\CC \EE(H)$. 
\end{dfn}

\begin{dfn}
\label{d:DiskComplex}
The {\it disk complex} $\plex{\CC \EE(H)}$ is the complex defined as follows: vertices correspond to equivalence classes $[D]$ of the disk set $\CC \EE(H)$. A collection of $n$ vertices spans an $(n-1)$-simplex if there are representatives of each which are pairwise disjoint away from a neighborhood of $\TT^1$.
\end{dfn}

\begin{dfn}
\label{d:Indexn}
The surface $H$ is said to be {\it topologically minimal in the tetrahedron $\Delta$} if the complex $\plex{\CC \EE(H)}$ is either empty or non-contractible. In the former case we define the {\it local index} of $H$ to be 0. In the latter case, the {\it local index} of $H$ is defined to be the smallest $n$ such that  $\pi_{n-1}(\plex{\CC \EE(H)})$ is non-trivial. 
\end{dfn}

The following definition and lemma will aid us in establishing the third conclusion of Theorem \ref{t:summary}.

\begin{dfn}
We say two components of a surface $H \subset \Delta$ are {\it normally separated} if there is a normal triangle or quadrilateral in $\Delta$ separating them. The {\it normal components} of $H$ are then the disjoint unions of the components of $H$ that are not normally separated. A surface is {\it normally connected} if it has one normal component. (Equivalently, no two components of $H$ can be normally separated.) 
\end{dfn}

For example, if the components of $H$ are two parallel octagons and two normal triangles, then $H$ has three normal components. One of these normal components will contain the two octagons, and the other two will each contain a single normal triangle. 

\begin{lem}
If a surface $H$ in a tetrahedron $\Delta$ has local index $n$, then the normal components of $H$ have well-defined local indices, and the sum of these indices is precisely $n$. 
\end{lem}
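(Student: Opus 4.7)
The plan is to induct on the number of normal components of $H$. If $H$ has a unique normal component, the statement is vacuous: that component is $H$ itself, with local index $n$. Otherwise, by the (equivalent) characterization of normal connectedness recorded after the definition, some two components of $H$ are normally separated, so there is a normal disk $T \subset \Delta$ (which we may assume disjoint from all of $H$ after a small isotopy) separating at least one pair of components. Since $T$ cuts $\Delta$ into two 3-balls, and any normal component of $H$ must lie entirely on one side of $T$ — for otherwise $T$ itself would normally separate two components within that normal component — we obtain a partition $H = H^A \sqcup H^B$ with both pieces non-empty and each a non-trivial union of normal components of $H$. In particular each side has strictly fewer normal components than $H$, so the inductive hypothesis will apply after we understand the local indices of $H^A$ and $H^B$.

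The heart of the proof is to establish the join decomposition
\[
\plex{\CC\EE(H)} \;=\; \plex{\CC\EE(H^A)} * \plex{\CC\EE(H^B)}.
\]
For this, the first step is to show that every disk $D \in \CC\EE(H)$ is equivalent, via isotopy through $\CC\EE(H)$, to one disjoint from $T$. This will be a standard innermost-disk argument: since $T$ is disjoint from $H$, the intersection $D \cap T$ may, after a small perturbation that nudges the $\TT^1$-arc of $\bdy D$ off the corners of $T$ in the edge-compressing case, be taken to consist of loops only. An innermost such loop on $D$ bounds a subdisk whose interior is disjoint from $H$ and from $T$, and which together with a subdisk of $T$ cobounds a 3-ball in $\Delta \setminus H$ across which $D$ can be isotoped to reduce $|D \cap T|$. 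After finitely many steps $D$ lies on one side of $T$, its boundary data sits entirely on $H^A$ or $H^B$, and essentiality on $H$ coincides with essentiality on that side, so $D$ represents a vertex of the appropriate factor. Since disks sitting in opposite 3-balls are automatically disjoint away from $\TT^1$, any simplex of one factor spans a simplex with any simplex of the other, which is precisely the combinatorial definition of the join.

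The final step is to invoke the standard fact that the index of Definition \ref{d:Indexn} is additive under simplicial joins, with the convention that the empty complex has index $0$. If either factor is empty, the join is the other factor and additivity is immediate. Otherwise both factors are non-empty and non-contractible of indices $n_X, n_Y \geq 1$, so each is $(n_X - 2)$- and $(n_Y - 2)$-connected with first non-vanishing reduced homotopy group in degrees $n_X - 1$ and $n_Y - 1$ respectively. The standard connectivity-of-join estimate then shows $X * Y$ is $(n_X + n_Y - 2)$-connected, while the homology identification $\tilde H_k(X * Y) \cong \tilde H_{k-1}(X \wedge Y)$ together with the K\"unneth formula and the Hurewicz theorem shows $\pi_{n_X + n_Y - 1}(X * Y) \ne 0$. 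Applying this to the decomposition above and inducting on each side yields the desired additivity across all normal components of $H$.

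I expect the main obstacle to be the innermost-disk step for edge-compressing disks, where one must simultaneously arrange that $\bdy D \cap \TT^1$ avoids the (finitely many) corners of $T$, that the 3-balls used for isotopy avoid not only $H$ but also $\TT^1$, and that the resulting one-parameter family of disks genuinely stays in $\CC\EE(H)$ so that the equivalence class $[D]$ is truly preserved. The rest of the argument — the combinatorial identification as a join and the topological conclusion about index additivity — is essentially formal once this reduction is in hand.
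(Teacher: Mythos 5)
Your proof follows the same route as the paper's: decompose $H$ along a separating normal disk, show $\plex{\CC\EE(H)}$ is the join of the complexes of the two sides, invoke additivity of local index under joins, and induct on the number of normal components. The paper simply asserts the join decomposition and cites Theorem 4.7 of \cite{TopIndexI} for the additivity, whereas you supply the innermost-disk argument and a sketch of the additivity proof; this is the same argument, just with more of the details written out.
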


\begin{proof}
Suppose $H=H_1 \cup H_2$, where $H_1$ and $H_2$ are subsurfaces that are separated by a normal triangle or quadrilateral. Then every compressing and edge-compressing disk for $H$ is a compressing or edge compressing disk for either $H_1$ or $H_2$, and vice versa. Hence, the complex $[\CC \EE(H)]$ is the join of the complexes $[\CC \EE(H_1)]$ and $[\CC \EE(H_2)]$. It follows that both $H_1$ and $H_2$ have well-defined local indices, and the sum of these indices is precisely the local index of $H$ (see, for example, the proof of Theorem 4.7 of \cite{TopIndexI}). The lemma now follows by inducting on the number of normal components of $H$. 
\end{proof}

Following this, to establish the  third conclusion of Theorem \ref{t:summary} we only need to show that a normally connected surface $H$ with well-defined local index is actually connected. Hence, in the remaining sections we will assume the surface $H$ under consideration is normally connected. In particular, $H$ has no components that are normal triangles or quadrilaterals.

\section{The topology of topologically minimal surfaces in tetrahedra}
\label{s:TopologyConnected}

In this section we limit the topology of a topologically minimal surface $H$ in a tetrahedron $\Delta$ and establish the first two conclusions of Theorem \ref{t:summary}.

%\begin{dfn}
%\label{d:Scomplexity}
%Suppose there is a homotopically non-trivial map $\iota$ of an $(n-1)$-sphere $S$ into $\plex{\CC \EE(H)}$, and $S$ has been triangulated  so that the map $\iota$ is simplicial. Then we defined the {\it complexity of $S$} to be the $n$-tuple
%\[c(S)=(c_{n-1}, ..., c_0),\]
%where $c_i$ denotes the number of $i$-dimensional simplices in $S$ spanned by edge-compressing disks. Note that $c(S)$ depends on both $\iota$, and the choice of triangulation of $S$. 
%\end{dfn}

\begin{lem}
\label{l:DisjointCompressing}
Let $H$ be a surface in a tetrahedron $\Delta$. If $\EE$ is a collection of pairwise disjoint edge-compressing disks for $H$, and $C$ is a compressing disk for $H/\EE$, then $C$ is a compressing disk for $H$ that is disjoint from every element of $\EE$. 
\end{lem}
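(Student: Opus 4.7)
The plan is to isotope $C$, keeping it a compressing disk for $H/\EE$, so as to minimize $|C \cap \bigcup \EE|$, and then show this minimum is zero. The key observation is that for each $E \in \EE$, the interior of $E$ is disjoint from $H/\EE$: the $/E$ operation removes the strip $N(\alpha) = \alpha \times I$ from $H$ and attaches the two scars $E \times \bdy I$, placing $E = E \times \{\tfrac{1}{2}\}$ in the slot between them. Consequently, $\bdy E = \alpha \cup \beta$ meets $H/\EE$ only at the two corner points $\bdy \alpha = \bdy \beta \subset H \cap \TT^1$.

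Assume for contradiction that $C \cap \EE \ne \emptyset$. First suppose $C \cap E$ contains a loop for some $E \in \EE$. Take an innermost such loop on $E$, bounding a subdisk $E_0 \subset E$ with $C \cap E_0 = \bdy E_0$; since any arc of $C \cap E$ would have its endpoints on $\bdy E$, no such arc can lie in $E_0$. This loop also bounds a subdisk $D_0 \subset C$, and the sphere $D_0 \cup E_0$ bounds a ball in $\Delta$. Since a thin regular neighborhood of $E_0$ is disjoint from $H/\EE$ (as $E_0 \subset \text{int}(E)$ is bounded away from the corners) and from the other disks of $\EE$ (since $\EE$ is pairwise disjoint), the standard innermost-loop surgery replacing $D_0$ by a push-off of $E_0$ produces a new compressing disk for $H/\EE$ with the same boundary and strictly fewer intersection components with $\EE$. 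Iterating, we may assume each $C \cap E$ consists only of arcs.

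Now take an outermost arc $\gamma$ of $C \cap E$ in $E$, cutting off a subdisk $E_\gamma \subset E$ with $\bdy E_\gamma = \gamma \cup \delta$ and $\delta \subset \bdy E = \alpha \cup \beta$. The interior of $E_\gamma$ is disjoint from $C$ (by outermost-ness) and from $H/\EE$, while $\gamma \subset \text{int}(C)$ and $\delta \subset \alpha \cup \beta$ lie in $\Delta \setminus H/\EE$ away from the corners of $E$. Hence a sufficiently thin product neighborhood $N(E_\gamma)$ can be chosen disjoint from $H/\EE$ and from the remaining disks of $\EE$, and pushing $\gamma$ across $N(E_\gamma)$ to a push-off of $\delta$ produces a new compressing disk $C'$ for $H/\EE$ with $\bdy C' = \bdy C$ and one fewer arc of intersection with $E$, again contradicting minimality.

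Therefore $C$ is disjoint from $\EE$, and $\bdy C$ lies on the unmodified part of $H/\EE$, which is a subset of $H$. To verify $\bdy C$ is essential on $H$, suppose it bounds a disk $D \subset H$. Since $\bdy D = \bdy C \subset \text{int}(H)$, the disk $D$ is contained in $\text{int}(H)$ and so is disjoint from $\bdy H$. Each arc $\alpha_E = E \cap H$ is connected, has both endpoints on $\bdy H$, and is disjoint from $\bdy D$, so it is contained in the component of $H \setminus \bdy D$ not equal to $D$; thus $\alpha_E \cap D = \emptyset$ for every $E \in \EE$. Hence $D$ persists unchanged in $H/\EE$ as a disk bounded by $\bdy C$, contradicting essentiality on $H/\EE$. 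The main technical difficulty is the outermost arc step, specifically arranging the product neighborhood $N(E_\gamma)$ to avoid $H/\EE$ near the corners of $E$ where $\bdy E$ transitions between $\alpha$ (in the slot) and $\beta$ (on the $1$-skeleton).
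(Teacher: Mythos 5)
Your proof is correct in outline but takes a genuinely different and considerably more labor-intensive route than the paper. The paper's argument is a one-liner based on the observation that the entire discrepancy between $H$ and $H/\EE$, together with the disks of $\EE$, can be isotoped to lie in an arbitrarily small neighborhood of $\bdy\Delta$ (this is what ``these bands can be chosen to be arbitrarily close to $\bdy\Delta$'' means: one swings the $H$-side of each product ball $N(E)$ across to the $\bdy\Delta$-side, carrying $E$ along). Since $C$ is a compressing disk for $H/\EE$ it may be pushed off a collar of $\bdy\Delta$, and then $C$ automatically misses every band and every disk of $\EE$, so $C\cap H=\bdy C$ and $C\cap\EE=\emptyset$ with no surgery at all. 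Your approach instead fixes $\EE$ in place and removes intersections of $C$ with $\bigcup\EE$ one at a time via innermost-circle and outermost-arc moves. This is a standard and ultimately workable strategy, and it has the virtue of not requiring any repositioning of $\EE$, but it buys nothing here and creates the corner problem that you flag at the end.

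On the details: the corner difficulty you identify is real, and you have a small misstatement feeding into it. The corners of $E$ (the points $\bdy\alpha\times\{1/2\}=\bdy\beta\times\{1/2\}$) are \emph{not} on $H/\EE$; what sits near them are the corner points $\bdy\alpha\times\{0,1\}$ of the two scars, which \emph{are} on $H/\EE$ and on $\bdy\Delta$. So near each corner of $E$ you have the scar boundaries crowding in, which is exactly what makes the ``push $\gamma$ across $E_\gamma$'' isotopy delicate when $\delta$ runs through a corner. You can eliminate this entirely (and also kill the case $\delta\supset\beta$) by first isotoping $C$ off a collar neighborhood $N(\bdy\Delta)$ of $\bdy\Delta$ that contains $\beta\times I\subset\bdy N(E)$ and the scar corners; this is possible because $C\cap\bdy\Delta$ consists of circles interior to $C$ (as $\bdy C$ lies in the interior of $H/\EE$) and $\Delta$ is a ball. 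After this, arcs of $C\cap E$ have endpoints only on $\alpha\setminus N(\bdy\Delta)$, which is interior and bounded away from $H/\EE$, and the outermost-arc isotopy goes through without incident. Alternatively, you could simply adopt the paper's repositioning trick and skip the surgery altogether. Your final paragraph establishing essentiality of $\bdy C$ on $H$ is correct and is the same argument the paper implicitly uses.
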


\begin{proof}
To reconstruct $H$ from $H/\EE$, we attach bands between subarcs of $\bdy H/\EE$. As $C$ is in the interior of $\Delta$, and these bands can be chosen to be arbitrarily close to $\bdy \Delta$, the result follows. 
\end{proof}

The first two conclusions of Theorem  \ref{t:summary} follow immediately from the following theorem.

\begin{thm}
\label{t:EdgeCompressions}
Suppose $H$ is an index $n$, topologically minimal surface in a tetrahedron $\Delta$. Then there is a collection $\EE$ of at most $n$ edge-compressing disks for $H$ that are pairwise disjoint away from a neighborhood of $\TT^1$, such that $H/\EE$ is a collection of disks. 
\end{thm}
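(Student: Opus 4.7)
I will proceed by induction on the local index $n$.

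For the base case $n=0$, the disk complex $[\CC\EE(H)]$ is empty, and I must show that the normally connected $H$ is itself a disjoint union of disks. I rule out each non-disk possibility case by case: a closed component of positive genus would admit a compressing disk (since $\Delta$ is a ball); a non-disk component with boundary must admit either a compressing or edge-compressing disk via an innermost-loop / outermost-arc argument on $\bdy\Delta \setminus H$; and a sphere component is forbidden by normal connectedness, since an innermost such sphere bounds a ball that can be separated from the remaining components by a normal triangle or quadrilateral. Each alternative thus contradicts $\CC\EE(H)=\emptyset$ or the normal-connectedness hypothesis.

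For the inductive step with $n\geq 1$, the main sub-claim is: \emph{there exists an edge-compressing disk $E\in\EE(H)$ such that $H/E$ has local index at most $n-1$.} Granting this, I verify the edge-compression analogue of Lemma~\ref{l:DisjointCompressing} --- any edge-compressing disk for $H/E$ can be isotoped off the scars of $E$, because the reconstruction bands joining the scars may be pushed arbitrarily close to $\bdy\Delta$ along a single edge and hence off any prescribed disk --- so that $\EE(H/E)$ embeds into $\EE(H)$ via disks disjoint from $E$. The inductive hypothesis applied to $H/E$ then yields a collection $\EE'$ of at most $n-1$ edge-compressing disks, pairwise disjoint away from $\TT^1$, with $(H/E)/\EE'$ a disjoint union of disks. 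Setting $\EE:=\{E\}\cup\EE'$ produces the required collection for $H$.

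The main obstacle is the sub-claim itself. Non-triviality of $\pi_{n-1}([\CC\EE(H)])$ together with a standard star/link argument (in the spirit of the proof of Theorem~4.7 of \cite{TopIndexI}) produces a vertex $[D]\in[\CC\EE(H)]$ whose link has non-trivial $\pi_{n-2}$; this link is naturally identifiable with the disk complex of $H/D$ restricted to disks disjoint from the scars of $D$, so $H/D$ has local index at most $n-1$. The delicate point is to arrange that $[D]$ is edge-compressing rather than merely compressing. My strategy is to prove that $[\EE(H)]$ is $(n-1)$-connected in $[\CC\EE(H)]$ by providing a canonical replacement of any compressing disk $C$ by a nearby edge-compressing disk: normal connectedness forces $H$ to meet the 2-skeleton non-trivially, and one can boundary-compress $C$ toward $\TT^1$ to obtain an edge-compressing disk playing the same homotopical role. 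Ensuring this replacement is compatible with the simplicial structure of $[\CC\EE(H)]$ --- so that pairwise disjointness is preserved and the index-reduction property is not lost --- is the technical heart of the argument.
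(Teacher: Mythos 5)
Your approach is genuinely different from the paper's: you attempt an induction on local index, repeatedly finding a single edge-compressing disk that drops the index, whereas the paper works with a single essential $(n-1)$-sphere $S$ in $\plex{\CC\EE(H)}$ and minimizes the total number of \emph{edge-simplices} (simplices spanned by edge-compressing vertices). In the paper, one shows (i) an innermost nontrivial loop of $\bdy H$ on $\bdy\Delta$ bounds a compressing disk disjoint from \emph{every} other compressing disk, so $S$ must contain at least one edge-simplex, and (ii) if $\sigma$ is a maximal edge-simplex with representatives $\EE$ and $H/\EE$ were compressible, an innermost compressing disk for $H/\EE$ would cone off the star of $\sigma$ and lower the count, contradiction. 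This never needs to control the index of $H/\EE$ — only its (in)compressibility — and it only invokes Lemma~\ref{l:DisjointCompressing} in the direction it is actually stated (compressing disks for $H/\EE$ are compressing disks for $H$), which is unproblematic because compressing disks live in $\mathrm{int}(\Delta)$.

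There are real gaps in your version. The central one is the sub-claim that some $E\in\EE(H)$ satisfies $\mathrm{index}(H/E)\le n-1$, which you do not prove; and the strategy you propose — showing $\plex{\EE(H)}$ is highly connected in $\plex{\CC\EE(H)}$ by ``boundary-compressing $C$ toward $\TT^1$'' — does not give a simplicial retraction: a compressing disk admits many inequivalent $\bdy$-compressions, and there is no reason two disjoint compressing disks $C_1,C_2$ (or a compressing disk $C$ and an edge-compressing disk already in $S$) produce disjoint replacements, so pairwise disjointness is not preserved. Moreover, even after arranging an edge-compressing vertex $[E]$ on $S$, the step ``the link of $[E]$ is essential, hence $H/E$ has index $\le n-1$'' requires identifying that link with $\plex{\CC\EE(H/E)}$, and the edge-compressing direction of that identification is exactly where your claimed analogue of Lemma~\ref{l:DisjointCompressing} is weakest: the reconstruction band $N(E)$ for an \emph{edge}-compressing disk necessarily runs along the edge of $\TT^1$ that $E$ meets, so it cannot be pushed ``arbitrarily close to $\bdy\Delta$ and hence off any prescribed disk'' when the prescribed disk is another edge-compressing disk incident to that same edge; the interior-vs-boundary distinction that makes Lemma~\ref{l:DisjointCompressing} trivial for compressing disks is precisely what is missing here. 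Finally, the base case as written leans on normal connectedness (not a hypothesis of this theorem, and in any case a single sphere component is normally connected yet not a disk), so that step also needs to be replaced by the implicit assumption that $H$ has no closed components.
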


\begin{proof}
We will say a loop of $\bdy H$ is {\it non-trivial} if it does not bound a disk component of $H$. If $H$ is a collection of disks, the result is trivial. If not, then a component of $\bdy H$ that is innermost on $\bdy \Delta$ among all non-trivial loops bounds a compressing disk for $H$. Thus, $\CC \EE(H) \ne \emptyset$. By definition then, $n \ge 1$ and there is a homotopically non-trivial map $\iota$ of an $(n-1)$-sphere $S$ into $\plex{\CC \EE(H)}$. Triangulate $S$ so that the map $\iota$ is simplicial.

\begin{clm}
There is a vertex $v$ of $S$ such that $\iota(v)$ represents an edge-compressing disk. 
\end{clm}

\begin{proof}
If not, then every vertex in the image of $\iota$ represents a compressing disk. As above, a component of $\bdy H$ that is innermost on $\bdy \Delta$ among all non-trivial loops bounds a compressing disk $C$ for $H$. Furthermore, the disk $C$ can be isotoped to be disjoint from every other compressing disk for $H$. Thus, the vertex $[C]$ of $\plex{\CC \EE(H)}$ is connected by an edge to every vertex in the image of $\iota$. This contradicts the assumption that $\iota$ is homotopically non-trivial. 
\end{proof}

We will call a simplex of $S$ an {\it edge-simplex} if it is spanned by vertices $v$ such that $\iota(v)$ is an edge-compressing disk. Henceforth, we will assume that all choices have been made so that the total number $\epsilon$ of edge-simplices of $S$ (of any dimension) is as small as possible. It follows from the previous claim that $\epsilon>0$.

\begin{clm}
Suppose $\sigma$ is a maximal dimensional edge-simplex of $S$ (i.e. $\sigma$ is not a face of any other edge-simplex). Let $\EE$ be a collection of disks representing the vertices of $\iota(\sigma)$. Then $H/\EE$ is incompressible. 
\end{clm}

\begin{proof}
By way of contradiction, assume $H/\EE$ is compressible. Let $\alpha$ be a component of $\bdy (H /\EE)$ that is innermost on $\bdy \Delta$, among all loops that do not bound disk components of $H/\EE$. Then $\alpha$ bounds a compressing disk $C$ for $H/\EE$ that can be made disjoint from every other compressing disk for $H/\EE$. By Lemma \ref{l:DisjointCompressing}, $C$ is also a compressing disk for $H$ that is disjoint from every disk representing the vertices of $\iota(\sigma)$.

Let $\Sigma$ denote the union of the simplices in $S$ that contain $\sigma$. Thus, each vertex $v$ of a simplex of $\Sigma$ is either contained in $\sigma$, or is connected by an edge to every vertex of $\sigma$. In the latter case, $\iota(v)$ must be represented by a compressing disk $V$ for $H$, as otherwise $\sigma \cup v$ would be an edge-simplex containing $\sigma$, contradicting our assumption that $\sigma$ was of maximal dimension. As $v$ is connected by an edge to each vertex of $\sigma$ it must be disjoint from every edge-compressing disk in $\EE$. Thus, the disk $V$ is also a compressing disk for the surface $H/\EE$. Since $C$ is disjoint from every compressing disk for $H/\EE$, it must be the case that either $[C]=[V]$ in $\plex{\CC \EE(H)}$, or $[C]$ is connected to $[V]$ by an edge. In either case, this shows that we can alter the map $\iota$ by replacing $\Sigma$ with the cone of $\bdy \Sigma$ to a vertex $c$, where $\iota(c)=[C]$. This replacement removes the edge-simplex $\sigma$ without introducing any new edge-simplices, thus lowering $\epsilon$. 
\end{proof}

The theorem now follows from the previous claims. 
\end{proof}

The following lemmas are an important step in the proof of the third conclusion of Theorem \ref{t:summary}.

\begin{lem}
\label{l:SameComponent}
Suppose $H$ is a normally connected surface in a tetrahedron $\Delta$ with well-defined local index and $A$ is a component of $\bdy \Delta -\bdy H$. If there is a loop $\alpha$ in $A$ of length three or four, then there are boundary components of $A$ on either side of $\alpha$ that belong to the same component of $H$. 
\end{lem}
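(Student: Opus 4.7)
\emph{Proof plan.} My plan is a proof by contradiction. I will suppose that no component of $H$ has boundary loops on both sides of $\alpha$ in $\partial A$, and from this produce a normal triangle or quadrilateral in $\Delta$ that separates two components of $H$, contradicting the normal connectedness of $H$.

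Since $\alpha \subset A$ is a normal loop of length 3 or 4, it bounds a unique normal triangle (if $|\alpha|=3$) or normal quadrilateral (if $|\alpha|=4$) $D_0 \subset \Delta$. The disk $D_0$ cuts $\Delta$ into two balls $B_L, B_R$ meeting $\partial \Delta$ in the two components of $\partial \Delta - \alpha$. Once $D_0$ is arranged disjoint from $H$, the argument will finish immediately: by the contrapositive assumption, each component of $H$ lies entirely in the ball on the same side of $D_0$ as its $\partial A$-boundary loops; provided both sides of $\alpha$ carry at least one $\partial A$-loop, $D_0$ then separates two components of $H$, contradicting normal connectedness.

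The main technical step is to arrange $D_0 \cap H = \emptyset$ by an innermost-disk argument on $D_0$. The intersection $D_0 \cap H$ consists of simple closed curves in the interior of $D_0$, since $\partial D_0=\alpha$ is disjoint from $H$. Let $\gamma$ be an innermost such curve, bounding $D' \subset D_0$ with $D' \cap H = \gamma$. If $\gamma$ is inessential in $H$, bounding a disk $E \subset H$, then $D' \cup E$ bounds a ball in $\Delta$, and a small isotopy of $H$ past $D'$ strictly reduces $|D_0 \cap H|$. If $\gamma$ is essential in $H$, then $D'$ is a compressing disk for $H$; here I will invoke the unknottedness of $H$ (Conclusion (1) of Theorem \ref{t:summary}, which I plan to have established earlier in Section \ref{s:TopologyConnected}) to find a disk $D'' \subset \Delta - H$ with $\partial D'' = \gamma$ lying on the opposite side of $H$, and perform the cut-and-paste $(D_0 \setminus D') \cup D''$ to obtain a disk with strictly fewer intersections with $H$. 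After finitely many reductions I obtain a disk with boundary $\alpha$ disjoint from $H$, which can then be isotoped in $\Delta - H$ to an honest normal triangle or quadrilateral whose boundary is a normal loop parallel to $\alpha$ in $A$, supplying the desired separating normal disk.

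I anticipate two subtleties. The degenerate case, in which one side of $\alpha$ contains no $\partial A$-loop, means $\alpha$ bounds a disk in $A$; here the lemma is either vacuously satisfied or else a slight inward push of $D_0$ produces a normal disk whose ``small'' ball contains a component of $H$, yielding normal separation of that component from the rest. The chief obstacle is the essential-curve case of the innermost-disk argument: it depends on the unknottedness of $H$ being available at this point in the paper, and requires choosing $D''$ so that the cut-and-paste does not introduce new intersections (this should follow by taking $D''$ in a thin collar of $H$ near $\gamma$). Should the unknottedness invocation be premature in the logical order of the paper, a fallback is to prove the lemma directly via normal surface theory, using a barrier surface argument to produce a normal disk parallel to $\alpha$ in $A$ and disjoint from $H$.
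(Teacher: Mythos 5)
Your high-level strategy (assume the conclusion fails, exhibit a normal triangle or quadrilateral disjoint from $H$, and contradict normal connectedness) is in the right spirit, and in fact normal connectedness is also the engine of the paper's proof. But the technical heart of your argument---the claim that the normal disk $D_0$ bounded by $\alpha$ can be isotoped off $H$ by an innermost-disk argument---has a genuine gap, and it is exactly the gap that Theorem \ref{t:EdgeCompressions} is designed to fill.

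The problem is the essential-curve case. When the innermost curve $\gamma = D' \cap H$ is essential in $H$, you want a disk $D''$ on the opposite side of $H$ with $\bdy D'' = \gamma$. Conclusion (1) of Theorem \ref{t:summary} (unknottedness, in the sense that some collection of edge-compressing disks reduces $H$ to disks) does \emph{not} imply that an essential curve of $H$ bounding a compressing disk on one side also bounds one on the other side; for surfaces of genus $\ge 1$ this is false in general. Even granting such a $D''$, the surgered disk $(D_0 \setminus D') \cup D''$ has no reason to meet $H$ fewer times, since $D''$ may itself meet $H$ extensively; your ``thin collar'' fix does not work, because a disk on the opposite side of $H$ with boundary on $H$ cannot stay in a collar of $H$ (it would have to be a push-off of a disk in $H$, making $\gamma$ inessential). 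So the intersection count is not controlled and the induction does not terminate. The suggested fallback (a barrier-surface argument) presupposes incompressibility or a similar property, which a topologically minimal surface of positive index need not have.

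The paper's proof sidesteps all of this by working with $H/\EE$ instead of $H$: Theorem \ref{t:EdgeCompressions} provides a collection $\EE$ of edge-compressing disks with $H/\EE$ a union of disks. One then minimizes $|C \cap H|$ for the normal disk $C$ bounded by $\alpha$, and supposes no $E \in \EE$ crosses $\alpha$. An innermost non-trivial loop $\beta$ of $C - (H \cup \EE)$ must bound a disk $D$ in $\Delta - H/\EE$ (because $H/\EE$ is a union of disks, so \emph{every} curve disjoint from it bounds in the complement---no ``essential-curve'' case arises); since the bands reconstructing $H$ from $H/\EE$ can be pushed into a neighborhood of $\bdy\Delta$ and made disjoint from $D$, the disk $D$ is disjoint from $H$ itself, and isotoping $D$ into $C$ contradicts minimality. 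Hence some $E \in \EE$ crosses $\alpha$, and the arc $E \cap H$ directly provides a path in a single component of $H$ joining points of $\bdy A$ on the two sides of $\alpha$. This last step also gives the conclusion positively rather than by exhibiting a separating disk, which is cleaner than the contradiction with normal connectedness you aim for. If you want to salvage your outline, the missing ingredient is precisely the collection $\EE$ and the fact that $H/\EE$ is a union of disks; without it the innermost-disk reduction of $D_0 \cap H$ cannot be pushed through.
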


\begin{proof}
Let $C$ be a normal triangle or quadrilateral bounded by $\alpha$. Isotope $H$ so that $|C \cap H|$ is minimal. Let $\EE$ denote a collection of edge-compressing disks for $H$ such that $H/\EE$ is a collection of disks, as guaranteed to exists by Theorem \ref{t:EdgeCompressions}. We claim that some element $E \in \EE$ meets $\alpha=\bdy C$. If not, then there will be a non-trivial loop in $C - (H \cup \EE)$. Let $\beta$ denote an innermost loop of a maximal collection of pairwise disjoint, non-parallel, non-trivial loops in $C - (H \cup \EE)$. As $H/\EE$ is a collection of disks, the loop $\beta$ must bound a disk $D$ in $\Delta - H/\EE$. The surface $H$ can be recovered from $H /\EE$ by attaching bands that can be chosen to be arbitrarily close to $\bdy \Delta$. In particular, these bands can be chosen to be disjoint from $D$. We conclude the disk $D$ is disjoint from $H$. It follows that isotoping $D$ into $C$ reduces $|H \cap C|$, contradicting minimality. We conclude some element $E \in \EE$ meets $\alpha$. The arc $E \cap H$ will then be a path in $H$ connecting points of $\bdy A$ on opposite sides of $\alpha$.
\end{proof}

\begin{lem}
\label{l:NoLongConnected}
If $H$ is a normally connected surface in a tetrahedron $\Delta$ with well-defined local index and every component of $\bdy H$ has length three or four, then $H$ is connected. 
\end{lem}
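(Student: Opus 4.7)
The plan is to argue by contradiction. Suppose $H$ is disconnected. The loops of $\bdy H$ on $\bdy \Delta = S^2$ induce a planar decomposition; because each loop is a separating simple closed curve in $S^2$, the region-adjacency graph $T$ is a tree, with each edge (loop) of $T$ labeled by the component of $H$ that contains it. If every region of $\bdy \Delta - \bdy H$ were monochromatic (all its boundary loops in one component of $H$), then the common edge between adjacent regions fixes their label, and the connectedness of $T$ would force every loop of $\bdy H$ to lie in the same component of $H$---contradicting disconnectedness. Hence some region $A$ is \emph{mixed}, meaning $\bdy A$ contains loops from at least two distinct components of $H$.

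Next, I would use Lemma \ref{l:SameComponent} to show that in any mixed region $A$, every component $H_j$ with a loop on $\bdy A$ contributes at least two such loops. Given one loop $\beta \in \bdy A$ with $\beta \subset \bdy H_j$, push $\beta$ slightly into $A$ to obtain an essential loop $\alpha \subset A$ of the same length (3 or 4). The small annular side of $\alpha$ has $\beta$ as its only boundary component of $A$; Lemma \ref{l:SameComponent} thus forces a boundary component of $A$ on the large side of $\alpha$ to also lie in $H_j$. Hence $A$ has at least two $H_j$-labeled boundary loops for each component $H_j$ present on $\bdy A$; in particular $|\bdy A| \geq 4$.

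The crux is then to produce a length-3-or-4 loop $\alpha \subset A$ whose partition of $\bdy A$ coincides with a partition of the loops by component of $H$. Length-3-or-4 curves on $\bdy \Delta$ are combinatorially very rigid: each is (isotopic to) the boundary of a normal triangle or quadrilateral, hence encircles a prescribed subset of the four vertices of $\Delta$ of size 1 or 2, and two disjoint such loops can only be arranged in limited nested or vertex-disjoint patterns. Using the doubling conclusion from the previous step---at least two loops of each color on $\bdy A$---together with this non-crossing structure, I would case-analyze the possible configurations of color-labeled loops on $\bdy A$ and in each case exhibit an $\alpha \subset A$ of length 3 or 4 whose two sides separate the colors.

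Applying Lemma \ref{l:SameComponent} to this separator $\alpha$ would then produce a same-component pair of loops on opposite sides of $\alpha$, contradicting the chromatic split and completing the proof. The main obstacle is the combinatorial step producing $\alpha$: one must work through the ways in which at least two loops of each of two different components of $H$ can coexist on $\bdy A$ while remaining pairwise disjoint on $\bdy \Delta$, and verify that---no matter how the encircled vertex subsets are distributed among the colors---one of the finitely many isotopy types of length-3-or-4 loops in $A$ always realizes the required chromatic partition.
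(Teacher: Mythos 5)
Your plan is essentially the paper's argument, reorganized. The paper first observes that every component $A$ of $\bdy\Delta - \bdy H$ is an annulus, pair-of-pants, or t-shirt (so $|\bdy A|\le 4$), and then shows each such $A$ is monochromatic by applying Lemma \ref{l:SameComponent} to loops normally parallel to boundary components, falling back to a separating length-4 curve only in the t-shirt ``Case 2'' where the coloring splits $2+2$. Your version packages the same ingredients more cleanly: the ``doubling'' observation---that if $A$ is mixed, Lemma \ref{l:SameComponent} applied to a pushoff of any $\beta\subset\bdy A$ forces a second loop of the same color on the other side---immediately rules out mixed annuli and mixed pants by a counting contradiction, and reduces the problem to a t-shirt with a $2+2$ chromatic split. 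The one step you explicitly defer (``I would case-analyze\dots and exhibit an $\alpha$'') is precisely the paper's Case 2: the t-shirt only arises when all four boundary loops are normal triangles, and then each of the three $2+2$ partitions of the vertices is realized by a normal quadrilateral sitting in $A$, so the required separating $\alpha$ of length $4$ always exists. Two small things to make the sketch airtight: (i) you should actually state the bound $|\bdy A|\le 4$ (it follows from your rigidity remark, since at most one quad type can appear and at most four triangle types, so a region can meet at most four non-parallel normal curve types, and parallel copies produce annuli), as otherwise your ``case analysis'' is a priori unbounded; and (ii) the tree-of-regions framing, while correct, is doing no real work beyond the observation already implicit in the paper that ``every region monochromatic'' implies $H$ connected. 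Neither is a gap in the reasoning---just places where the write-up should be made explicit.
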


\begin{proof}
Each component $A$ of $\bdy \Delta -\bdy H$ is either an annulus, ``pair-of-pants" (i.e. thrice-punctured sphere), or ``t-shirt" (i.e. four times punctured sphere). It suffices to show that in all cases, every component of $\bdy A$ is on the same component of $H$. 

Consider first the case when $A$ is an annulus. Then a loop $\alpha$ on $A$ that is normally parallel to either boundary component will have length three or four. Applying Lemma \ref{l:SameComponent} to the loop $\alpha$ then tells us that both components of $\bdy A$ belong to the same component of $H$. 

When $A$ is a pair-of-pants, let $A_1$, $A_2$, and $A_3$ denote the boundary components of $A$, and $\alpha_i$ a loop in $A$ that is normally parallel to $A_i$. Applying Lemma \ref{l:SameComponent} to $\alpha_1$, we conclude that $A_1$ is on the same component of $H$ as either  $A_2$ or $A_3$. Suppose the former. We then apply Lemma \ref{l:SameComponent} to the loop $\alpha_3$ to conclude $A_3$ is on the same component of $H$ as $A_1$ and $A_2$. Thus, all three boundary components of $A$ are on the same component of $H$. 

We now deal with this case when $A$ is a ``t-shirt". This arrises when all three components of $\bdy A$ are length three. Let $A_1$, $A_2$, $A_3$, and $A_4$ denote the boundary components of $A$, and $\alpha_i$ a loop in $A$ that is normally parallel to $A_i$. Applying Lemma \ref{l:SameComponent} to $\alpha_1$, we conclude that $A_1$ is on the same component of $H$ as either  $A_2$, $A_3$, or $A_4$. Without loss of generality, assume $A_2$. We now apply Lemma \ref{l:SameComponent} to the loop $\alpha_3$ to conclude $A_3$ is on the same component of $H$ as $A_1$, $A_2$, or $A_4$. There are two cases to consider:

\bigskip

\noindent {\it Case 1.} $A_3$ is on the same component of $H$ as $A_1$ and $A_2$. Then apply Lemma \ref{l:SameComponent} to the loop $\alpha_4$ to conclude $A_4$ is on the same component of $H$ as $A_1$, $A_2$, and $A_3$. 

\bigskip

\noindent {\it Case 2.} $A_3$ is on the same component of $H$ as $A_4$. Let $\alpha$ be a loop on $A$ which separates $A_1$ and $A_2$ from $A_3$ and $A_4$. Then $\alpha$ has length four, so we may apply Lemma \ref{l:SameComponent} to it. We conclude $A_1$ and $A_2$ are on the same component of $H$ as $A_3$ and $A_4$. 
\end{proof}

\section{Bounding the length of boundary curves with no triangular components}

In this section we establish the fourth conclusion of Theorem \ref{t:summary}, assuming $\bdy H$ contains only loops of length at least 8. In this case, it is well known that $\TT^1$, the 1-skeleton of $\Delta$, consists of three pairs of opposite edges, $s$ and $s'$, $m$ and $m'$, and $l$ and $l'$, such that
		\begin{enumerate}
			\item $|\bdy H \cap s|=|\bdy H \cap s'|=a>0$
			\item $|\bdy H \cap m|=|\bdy H \cap m'|=b \ge a$
			\item $|\bdy H \cap l|=|\bdy H \cap l'|=c=a+b$
		\end{enumerate}
The length of $\bdy H$ is precisely $2(a+b+c)=4c$. Hence, an upper bound on $c$ provides us with an upper bound on $|\bdy H|$. We will call $s$ and $s'$ the {\it short edges}, $m$ and $m'$ the {\it medium edges}, and $l$ and $l'$ the {\it long edges}. Note that when $a=b$ the choice of short and medium edges can be made arbitrarily. 

For the remainder of this section $v$ will denote the vertex of $\Delta$ where the edges $s$, $m$, and $l$ meet. For any edge $e$ incident to the vertex $v$, we can identify the frontier $R_e$ of a neighborhood $N(e)$ of $e$ in $\Delta$ with $I \times I$ so that 
	\begin{itemize}
		\item for each $t \in I$, the arc $t \times I$ is parallel in $N(e)$ to $e$, and 
		\item for each component $\gamma$ of $H \cap R$ there is an $s \in I$ such that $\gamma=I \times s$. 
	\end{itemize}

\begin{dfn}
Let $\tau^e_\lt$ and $\tau^e_\rt$ denote the 2-simplices of $\Delta$ that are incident to the edges $0 \times I$ and $1 \times I$ of $R_e$, respectively. We say these are the 2-simplices that are to the {\it left} and {\it right} of $e$. For the three edges $s$, $m$, and $l$, we assume identifications of $R_s$, $R_m$, and $R_l$ with $I \times I$ have been chosen so that $\tau^m_\rt=\tau^l_\lt$, $\tau^l_\rt=\tau^s_\lt$, and $\tau^s_\rt=\tau^m_\lt$. 
\end{dfn}

\begin{dfn}
A subregion of $R_e$ cobounded by arcs of $H \cap R_e$, which does {\it not} meet a region of $\tau^e_\lt$ bounded by parallel normal arcs of $\bdy H$ is called a {\it left switch of $e$}. See Figure \ref{f:Switches}. Right switches are defined similarly. 
\end{dfn}

\begin{figure}
\psfrag{R}{$R_e$}
\psfrag{T}{$\tau^e_\lt$}
\psfrag{t}{$\tau^e_\rt$}
\psfrag{l}{Left switch}
\psfrag{r}{Right switch}
\[\includegraphics[width=3.5in]{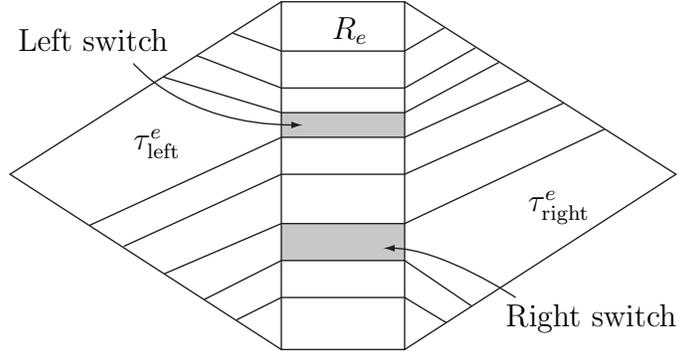}\]
\caption{Defining left and right {\it switches}.}
\label{f:Switches}
\end{figure}

\begin{dfn}
Let $E$ denote an edge-compressing disk for $H$ incident to an edge $e$. Suppose the arcs of $H \cap (R_e \cup \tau^e_\lt)$ that meet $E$ are parallel, as in Figure \ref{f:PushOff}. Let $E'$ denote the union of the shaded region in the figure, together with the disk $E$. Then $E'$ is an edge-compressing disk for $H$, called the {\it push-off of $E$ to its left}. Push-offs to the right are defined similarly. 
\end{dfn}

\begin{figure}
\psfrag{R}{$R_e$}
\psfrag{F}{$E'$}
\psfrag{E}{$E \cap R_e$}
\psfrag{S}{$\tau^e_\lt$}
\[\includegraphics[width=3in]{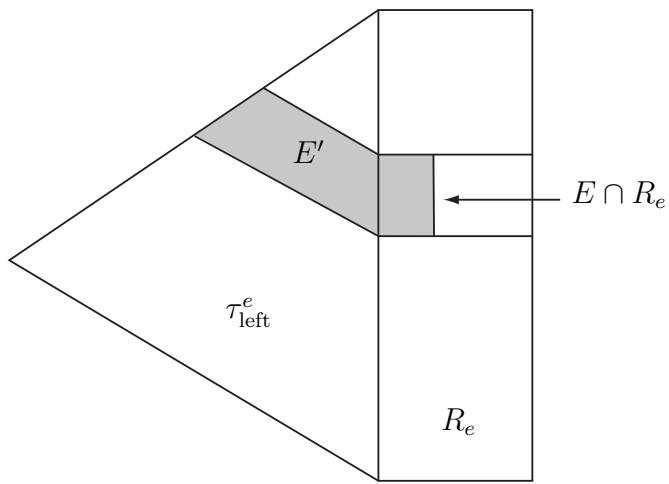}\]
\caption{Defining the push-off $E'$ of an edge-compressing disk $E$.}
\label{f:PushOff}
\end{figure}

The push-off operation can sometimes be iterated, as follows:

\begin{dfn}
Let $E$ denote an edge-compressing disk for $H$ incident to an edge $e$. Let $f$ denote an edge of $\Delta$, so that $\tau^f_\rt=\tau^e_\lt$. Suppose the arcs of $H \cap (R_e \cup \tau^e_\lt \cup R_f \cup \tau^f_\lt)$ that meet $E$ are parallel, as in Figure \ref{f:SecondPushOff}. Let $E''$ denote the union of the shaded region in the figure, together with the disk $E$. Then $E''$ is an edge-compressing disk for $H$, called the {\it second push-off of $E$ to its left}. Second push-offs to the right are defined similarly. 
\end{dfn}

\begin{figure}
\psfrag{R}{$R_e$}
\psfrag{r}{$R_f$}
\psfrag{F}{$E''$}
\psfrag{E}{$E \cap R_e$}
\psfrag{S}{$\tau^e_\lt=\tau^f_\rt$}
\psfrag{s}{$\tau^f_\lt$}
\[\includegraphics[width=3in]{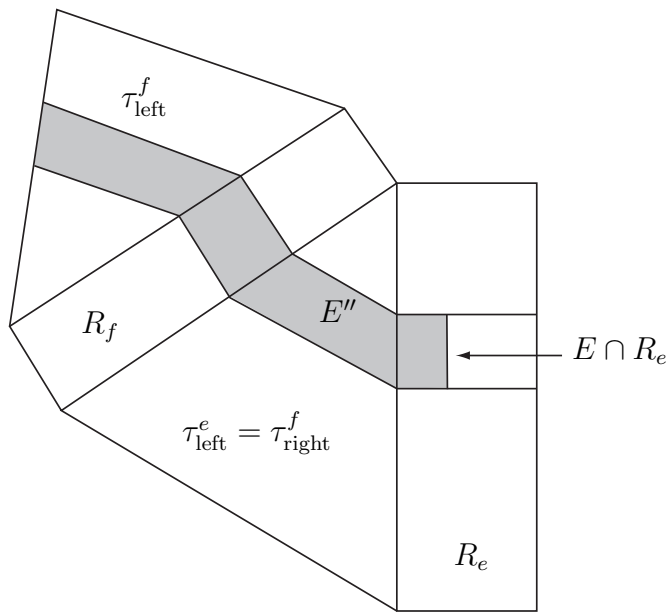}\]
\caption{The second push-off $E''$ of $E$.}
\label{f:SecondPushOff}
\end{figure}

\begin{dfn}
Let $\EE$ denote a pairwise disjoint collection of edge-compressing disks incident to $e$.  Isotope the disks in $\EE$ so that for each $E \in \EE$, there is a distinct $t_E \in I$ such that $E \cap R_e \subset t_E \times I$. For each disk $E \in \EE$, let $\gamma_E^{\pm}$ denote the arcs of $H \cap R_e$ incident to $E \cap R_e$. We say $E$ is a {\it leftmost} disk of $\EE$ if for each $E' \in \EE$ that meets $\gamma_E^+$ or $\gamma_E^-$, $t_E < t_{E'}$. See Figure \ref{f:Leftmost}. Rightmost disks are defined similarly. 
\end{dfn}

\begin{figure}
\psfrag{R}{$R_e$}
\psfrag{H}{$H \cap R_e$}
\psfrag{l}{Leftmost}
\psfrag{r}{Rightmost}
\[\includegraphics[width=2in]{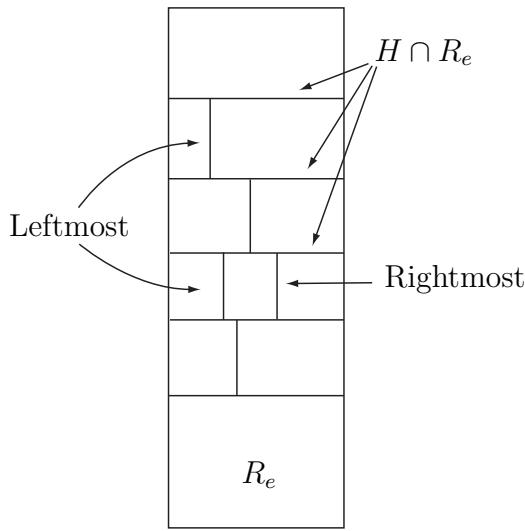}\]
\caption{Defining {\it leftmost} and {\it rightmost}.}
\label{f:Leftmost}
\end{figure}

Note that it follows immediately from the above definitions that if $E$ is a leftmost disk of a collection $\EE$, and $E'$ is the push-off of $E$ to its left, then $E'$ is disjoint from every disk in $\EE$.

As $H$ has local index $n$, there is a homotopically non-trivial map $\iota$ from an $(n-1)$-sphere $S$ into $[\CC \EE(H)]$. Triangulate $S$ so that the map $\iota$ is simplicial. 

\begin{dfn}
Let $e$ be an edge of $\Delta$. An $e$-simplex of $S$ is a simplex spanned by vertices that represent edge-compressing disks incident to $e$. Such a simplex is {\it full} if it is not a face of another $e$-simplex. 
\end{dfn}

\begin{lem}
\label{l:NoShort}
$S$ is homotopic to a sphere that contains no vertices representing edge-compressing disks incident to any short or medium edge.
\end{lem}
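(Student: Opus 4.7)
The plan is to homotope the simplicial map $\iota : S \to \plex{\CC \EE(H)}$ through simplicial maps until no vertex of $S$ maps to an edge-compressing disk incident to a short or medium edge. The engine for removing such vertices is the push-off operation, combined with a counting argument showing that push-offs of short- and medium-edge disks always land on long edges.

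The first task is a combinatorial bookkeeping of normal arcs in each $2$-simplex $\tau$ of $\Delta$. Each such $\tau$ has three types of normal arcs, one for each of its three vertices, and their counts $x,y,z$ are determined by the linear system whose right-hand sides are $|\bdy H \cap e|$ as $e$ ranges over the three edges of $\tau$. Using $a \le b$ and $c = a+b$, I would solve this system in each of the four $2$-simplices of $\Delta$ to conclude that in every $2$-simplex incident to a short edge, every normal arc meeting that short edge runs from it to the long edge of the same $2$-simplex, and similarly for medium edges. In particular, whenever $E$ is an edge-compressing disk incident to a short or medium edge $e$, the two normal arcs of $\bdy H \cap \tau^e_\lt$ that meet $E$ are parallel and run from $e$ to a long edge $l^*$ of $\tau^e_\lt$. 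Hence the push-off $E'$ of $E$ to its left is defined, and a boundary-tracing computation identifies $E'$ as an edge-compressing disk incident to $l^*$ rather than to $e$.

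Next, I would choose $\iota$ to minimize, over all simplicial maps homotopic to the original, the total number $N$ of vertices of $S$ mapping to edge-compressing disks on short or medium edges. Supposing $N \ge 1$ for contradiction, pick a short or medium edge $e$ and a full $e$-simplex $\sigma$ of $S$ of maximal dimension. Let $\EE$ denote the collection of disks representing the vertices of $\iota(\sigma)$, let $E \in \EE$ be a leftmost element, and let $E'$ be its push-off to the left. By the combinatorial step $E'$ is incident to a long edge $l^*$, and by the leftmost property $E'$ is disjoint from every other element of $\EE$.

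Letting $\Sigma$ denote the simplicial star of $\sigma$ in $S$, I would then show that $E'$ can be arranged, up to isotopy away from a neighborhood of $\TT^1$, to be disjoint from every disk $\iota(w)$ for $w$ a vertex of $\bdy \Sigma$. Fullness of $\sigma$ rules out $\iota(w)$ being incident to $e$; for compressing disks, and for edge-compressing disks incident to edges other than $e$ or $l^*$, the disjointness is essentially automatic from the fact that $E'$ lies in a small neighborhood of $E \cup \tau^e_\lt$. The delicate case is $\iota(w)$ incident to $l^*$, which I would handle by applying the leftmost argument to the enlarged collection formed by $\EE$ together with the disks of $\bdy \Sigma$ incident to $l^*$, using the leftmost choice of $E$ to ensure that the push-off $E'$ also lies to the left of all competing $l^*$-disks; if the first push-off is not enough I would pass to a second push-off to land further along $l^*$. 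Granted the disjointness claim, the map $\iota$ can be altered by replacing $\Sigma$ with the cone of $\bdy \Sigma$ to a new vertex mapping to $[E']$. Since $[E']$ is on $l^*$, this removes the vertex of $\sigma$ represented by $E$ (together with $\sigma$ itself) without introducing any new short- or medium-edge vertices, so $N$ strictly decreases, contradicting minimality. The main obstacle I anticipate is the disjointness claim for $l^*$-disks in $\bdy \Sigma$; the combinatorics of the first step is arranged precisely so that a leftmost/rightmost mechanism can be applied simultaneously to short- and long-edge disks in this situation, and this is where I expect the bulk of the technical work to lie.
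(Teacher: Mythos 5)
Your overall strategy matches the paper's: establish combinatorially that every edge-compressing disk incident to a short or medium edge has a push-off landing on a long edge (the paper phrases this as ``no switches on $e$'' when $e$ is short or medium and $\bdy H$ has no triangular components, which is equivalent to your normal-arc count), then remove the offending vertices by an exchange argument that cones off the star of a full $e$-simplex to the push-off. The combinatorial step is correct and is exactly where the argument gets its traction.

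There is, however, a genuine gap in your complexity measure. You propose to minimize $N$, the number of vertices of $S$ mapping to edge-compressing disks on short or medium edges, and you claim that replacing $\Sigma$ with the cone $\bdy\Sigma \ast [E']$ ``removes the vertex of $\sigma$ represented by $E$.'' It does not. For a simplex $\sigma$ in a triangulated sphere $S$, the star $\Sigma$ is $\sigma \ast \mathrm{lk}(\sigma)$ and its boundary is $\bdy\sigma \ast \mathrm{lk}(\sigma)$, which still contains \emph{every} vertex of $\sigma$, including $[E]$. Coning $\bdy\Sigma$ to a new vertex therefore leaves all of $\sigma$'s vertices in place; what disappears is the simplex $\sigma$ itself and the simplices strictly containing it. Consequently $N$ does not decrease, and your minimality hypothesis yields no contradiction. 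The paper avoids this by minimizing the number of $s$-, $s'$-, $m$-, and $m'$-\emph{simplices} (of all dimensions), a quantity that does drop by at least one (namely $\sigma$) while the cone creates no new such simplices, since every new simplex contains $[E']$ which lies on a long edge. Switching your complexity to a simplex count repairs the argument.

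A secondary comment: you anticipate the disjointness of $E'$ from the $l^*$-disks of $\bdy\Sigma$ to be the technical crux, envisioning second push-offs and enlarged leftmost collections. The paper disposes of this in one line: $E'$ is built from $E$ (which is disjoint from every disk represented in $\bdy\Sigma$, by fullness of $\sigma$ and the definition of $\Sigma$) together with a region lying in $\bdy\Delta$, and since disks in the complex are compared only up to disjointness away from a neighborhood of $\TT^1$, this added region can be kept off all the other disks. No leftmost bookkeeping beyond the initial choice is needed.
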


\begin{proof}
Choose $\iota$ and the triangulation of $S$ so that the sum $\epsilon$ of the total number of $s$-, $s'-$, $m-$, and $m'-$simplices is minimal. Our goal is to show that $\epsilon=0$. If not, then $S$ contains an $e$-simplex for some short or medium edge $e$. Thus, $S$ contains a full $e$-simplex, $\sigma$. Let $\EE$ denote a collection of disks represented by the vertices of $\sigma$.

Note that it follows from the fact that $e$ is a short or medium edge, and that there are no triangular components of $\bdy H$, that there are no switches on $e$. Hence, if $\EE$ contains a disk incident to $e$ then any rightmost disk $E$ of $\EE$ on $e$ has a push-off to the right. Such a push-off $E'$ will be incident to a long edge and will be disjoint from every disk in $\EE$. 

Let $\Sigma$ denote the union of the simplices in $S$ that contain $\sigma$. Suppose $[X]$ is a vertex of $\Sigma$ that is not in $\sigma$. If $X$ meets the edge $e$, then the join of $\sigma$ with $[X]$ would be an $e$-simplex containing $\sigma$ as a face. Since $\sigma$ is full, no such simplex exists. We conclude $X$ does not meet $e$. Furthermore, as $[X] \in \Sigma$, it is connected to every vertex of $\sigma$ by an edge. Thus, the disk $X$ is disjoint from every element of $\EE$. It follows that $X$ will be disjoint from the push-off $E'$, as this disk is constructed from $E$ (which is disjoint from $X$) and a subset of $\bdy \Delta$. 

As $E'$ is disjoint from every element of $\EE$ and every disk represented by a vertex of $\Sigma-\sigma$, we conclude the vertex $[E']$ is connected by an edge to every vertex of $\Sigma$ in $[\CC \EE(H)]$. Thus, we may replace $\Sigma$ in $S$ with the cone of $\bdy \Sigma$ to the point $[E']$. Since $\sigma$ is no longer an $e$-simplex of the resulting sphere, and no new simplices have been created spanned by edge-compressing disks incident to any other short or medium edge, we have lowered $\epsilon$.
\end{proof}

By Lemma \ref{l:NoShort}, we may henceforth assume that $S$ has no vertices representing edge-compressing disks incident to any short or medium edge. We now assume that in addition, $S$ has been chosen so as to minimize the total number $\delta$ of $l$-simplices.

\begin{lem}
\label{l:LongExists}
There is an $l$-simplex of $S$, and every full such simplex contains vertices representing edge-compressing disks that meet the points of $H \cap l$ that are closest to the endpoints of $l$.
\end{lem}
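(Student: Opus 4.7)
The plan is to prove both assertions by contradiction, using push-off and coning moves in the spirit of the proof of Lemma \ref{l:NoShort} and exploiting the assumed minimality of the invariant $\delta$.

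For the existence of an $l$-simplex, suppose no vertex of $\iota(S)$ represents an edge-compressing disk incident to $l$. By Lemma \ref{l:NoShort} every vertex of $\iota(S)$ must then represent either a compressing disk or an edge-compressing disk incident to $l'$. Since $|\bdy H \cap l|=c\ge 2$, the arc structure of $H$ near either endpoint of $l$ yields at least one edge-compressing disk $L$ for $H$ incident to $l$. Because $l$ and $l'$ are opposite edges of $\Delta$, we may take $L$ to lie in an arbitrarily small neighborhood of $l$; then $L$ is disjoint (away from $N(\TT^1)$) from every $l'$-edge-compressing disk and, since compressing disks lie in the interior of $\Delta$, also from every compressing disk for $H$. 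Thus $[L]$ is adjacent in $\plex{\CC\EE(H)}$ to every vertex of $\iota(S)$, so $\iota$ is null-homotopic after coning onto $[L]$, contradicting our choice of $\iota$.

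For the endpoint property, let $\sigma$ be a full $l$-simplex with $\EE=\iota(\sigma)$ and suppose, for contradiction, that no disk in $\EE$ meets the intersection point of $H\cap l$ closest to some endpoint $v_0$ of $l$; identify $R_l$ with $I\times I$ so that $v_0$ corresponds to the left. Let $E\in\EE$ be the leftmost disk of $\EE$. By assumption $E$ is not as far left as possible, so the push-off $E'$ of $E$ to its left is a well-defined edge-compressing disk still incident to $l$, and leftmostness of $E$ ensures $E'$ is disjoint from every other disk in $\EE$. Since $\sigma$ is a full $l$-simplex, any vertex of $\Sigma-\sigma$ (where $\Sigma$ is the closed star of $\sigma$) represents either a compressing disk or an $l'$-edge-compressing disk; by the same localization reasoning as above, each such representative is disjoint from $E'$. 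Hence $[E']$ is adjacent to every vertex of $\Sigma$, and we may replace $\Sigma$ by the cone of $\bdy\Sigma$ onto $[E']$.

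The main obstacle is that, unlike in Lemma \ref{l:NoShort}, the replacement vertex $[E']$ is itself an $l$-vertex, so a naive count shows that coning can create new $l$-simplices as well as destroy old ones. I would resolve this by refining the minimization: in addition to minimizing $\delta$, lexicographically minimize the finite multiset whose entries are the parameters $t_E$ of the leftmost disks $E$ of the full $l$-simplices of $S$ (and the mirror quantity for rightmost disks). Every $l$-simplex created by coning contains $[E']$, whose parameter is strictly smaller than that of $E$, so this multiset strictly decreases, contradicting the refined minimality. Running the symmetric argument at the other endpoint of $l$ then completes the proof.
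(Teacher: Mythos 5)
Your proposal departs substantially from the paper's argument and, as written, has two genuine gaps.

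For the existence of an $l$-simplex you posit an edge-compressing disk $L$ incident to $l$ that ``lies in an arbitrarily small neighborhood of $l$.'' No such disk exists. An edge-compressing disk incident to $l$ has $\bdy L = \alpha \cup \beta$ with $\beta$ a subarc of $l$ joining two consecutive points of $H \cap l$ and $\alpha$ an arc of $H$ joining those same two points; but near $l$ the sheets of $H$ through distinct points of $H \cap l$ are disjoint, so $\alpha$ must leave any small neighborhood of $l$. Consequently the claimed disjointness of $L$ from all compressing disks and all $l'$-disks is unsupported. The paper instead takes $E$ to be the closure of the component of $\Delta_{v^*} - H$ containing an endpoint $v^*$ of $l$; pushed slightly into $\Delta$, this is an edge-compressing disk incident to $l'$ (not $l$). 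Being isotopic into $\bdy\Delta$, this $E$ is automatically disjoint from every compressing disk, and it is disjoint from every edge-compressing disk that misses the three points $s^*$, $m^*$, $l^*$ nearest $v^*$. Combined with Lemma \ref{l:NoShort}, this forces some vertex of $S$ to represent a disk meeting $l^*$ — giving both existence of an $l$-simplex and the endpoint claim in a single coning argument.

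In your endpoint argument the push-off $E'$ of the leftmost disk of $\EE$ to its left need not be defined: a push-off exists only when the adjacent arcs of $H \cap (R_l \cup \tau^l_\lt)$ are parallel, and $l$, being a long edge, carries a left switch. If the unique leftmost disk of $\EE$ abuts that switch, there is no push-off to its left. (The proof of Theorem \ref{t:NoLong} escapes this only by producing leftmost disks on both sides of a gap, at least one of which must avoid the single left switch, and even then it uses a \emph{second} push-off landing on $l'$.) Finally, your auxiliary lexicographic minimization to cope with $[E']$ still being an $l$-vertex is an extra device the paper does not need, precisely because its coning vertex $[E]$ represents a disk incident to $l'$: replacing the star of a full $l$-simplex by a cone to $[E]$ strictly decreases the number of $l$-simplices with no compensating creation. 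Even setting aside the push-off issue, you would have to verify carefully that your refined complexity is well-defined under coning and genuinely decreases; the paper's choice of coning vertex makes all of this unnecessary.
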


\begin{proof}
Let $v^*$ denote one of the endpoints of $l$. Let $\Delta_{v^*}$ denote the union of the three 2-simplices of $\bdy \Delta$ that meet $v^*$. Let $s^*$, $m^*$, and $l^*$ denote the points of $\bdy H$ that are closest to $v^*$, on the short, medium, and long edges incident to $v^*$. Let $E$ be the closure of the component of $\Delta_{v^*} - H$ that contains the vertex $v^*$. See Figure \ref{f:Delta_v}. Then pushing $E$ slightly into $\Delta$ turns it into an edge-compressing disk for $H$ incident to the edge $l'$ opposite $l$. As $E$ is isotopic into $\bdy \Delta$, it is disjoint from every compressing disk for $H$, and every edge-compressing disk for $H$ that does not meet $s^*$, $m^*$, or $l^*$. Thus, if $S$ contains no vertices representing edge-compressing disks that meet $s^*$, $m^*$, and $l^*$, then $\iota(S)$ is homotopic to the point $[E]$, a contradiction. However, by assumption, there are no edge-compressing disks represented by vertices of $S$ that are incident to either $s$ or $m$. We conclude there is a vertex of $S$ that represents an edge-compressing disk incident to $l^*$.

\begin{figure}
\psfrag{L}{$l$}
\psfrag{l}{$l'$}
\psfrag{s}{$s^*$}
\psfrag{m}{$m^*$}
\psfrag{K}{$l^*$}
\psfrag{v}{$v^*$}
\psfrag{E}{$E$}
\[\includegraphics[width=3in]{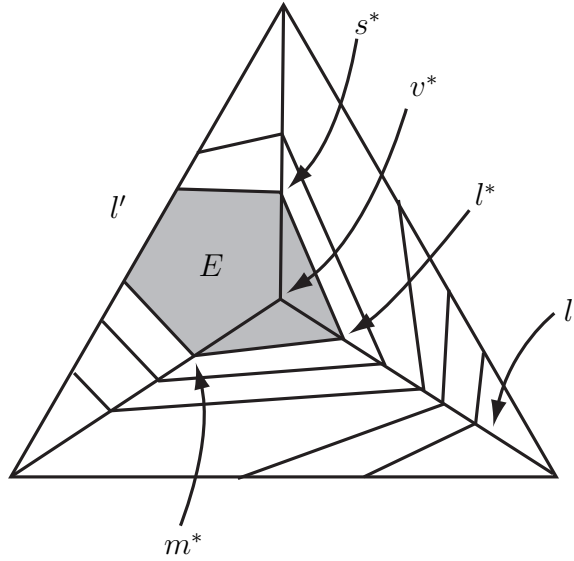}\]
\caption{The three 2-simplices of $\bdy \Delta$ that meet $v^*$, and the edge-compressing disk $E$ incident to $l'$.}
\label{f:Delta_v}
\end{figure}

The above argument implies there is a vertex of $S$ representing a disk that meets the edge $l$. Hence, there is an $l$-simplex, and thus a full $l$-simplex, $\sigma$. Let $\Sigma$ denote the union of the simplices of $S$ that contain $\sigma$. Then no vertex of $\Sigma-\sigma$ represents a disk incident to $l$, and hence all such vertices are connected by an edge to $[E]$ in $[\CC \EE(H)]$. Similarly, if no vertex of $\sigma$ represents a disk that meets the point $l^*$, then all such vertices are also connected by an edge to $[E]$ in $[\CC \EE(H)]$. We conclude that in this case we will be able to lower $\delta$ by replacing $\Sigma$ with the cone on $\bdy \Sigma$ to the point $[E]$. This contradiction establishes that any full $l$-simplex of $S$ contains an edge-compressing disk incident to $l^*$. 
\end{proof}

\begin{thm}
\label{t:NoLong}
If $H$ is an index $n$ topologically minimal surface in a tetrahedron where every component of $\bdy H$ has length at least $8$, then $|\bdy H| \le 4(n+1)$. 
\end{thm}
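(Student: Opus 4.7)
The plan is to show that every full $l$-simplex of the non-trivial sphere $S$ has dimension at least $c-2$. Since $\dim S = n-1$, this yields $c - 2 \le n - 1$, and therefore $|\bdy H| = 4c \le 4(n+1)$.

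First observe that because $E \cap H = \alpha$, for any edge-compressing disk $E$ incident to $l$ the subarc $\beta = E \cap l$ contains no point of $\bdy H \cap l$ in its interior. Labeling $\bdy H \cap l$ as $p_1, \ldots, p_c$ in order along $l$, every edge-compressing disk incident to $l$ must satisfy $\beta = [p_i, p_{i+1}]$ for a unique $i \in \{1, \ldots, c-1\}$; call $i$ its \emph{position}. By Lemma \ref{l:NoShort} we may assume $S$ has no vertices representing disks incident to short or medium edges, and applying Lemma \ref{l:LongExists} at each endpoint of $l$ shows that every full $l$-simplex $\sigma$ of $S$ contains vertices at positions $1$ and $c-1$.

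The main step is to establish that $\sigma$ contains a vertex at every position $j \in \{1, \ldots, c-1\}$; this gives $\dim\sigma \ge c - 2$. Supposing some position $j$ is missing, the strategy is to produce an edge-compressing disk $E_*$ at position $j$, on a suitably chosen side of $l$, that is disjoint modulo $N(\TT^1)$ from every disk of $\sigma$. Then $\sigma \cup [E_*]$ is a strictly larger $l$-simplex, contradicting the fullness of $\sigma$. The construction of $E_*$ proceeds from one of the extreme disks guaranteed by Lemma \ref{l:LongExists} and marches across $l$ by iterated push-offs and second push-offs, switching between $\tau^l_\lt$ and $\tau^l_\rt$ whenever necessary to dodge the finitely many disks of $\sigma$. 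If the direct construction of $E_*$ is blocked, the fallback is a coning argument in the style of Lemmas \ref{l:NoShort} and \ref{l:LongExists}: one exhibits a vertex $[E']$ of $[\CC \EE(H)]$ compatible with the entire star $\Sigma$ of $\sigma$, replaces $\Sigma$ with the cone on $\bdy \Sigma$ to $[E']$, and thereby lowers the count $\delta$ of $l$-simplices, contradicting the minimality of $S$.

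The main obstacle is the geometric realization of $E_*$ at a missing position: we must verify that the arcs of $H$ near $l$ actually admit an edge-compressing disk on the pair $(p_j, p_{j+1})$ on a side compatible with all disks of $\sigma$. This requires careful control of how $H$ meets the adjacent 2-simplices $\tau^l_\lt$ and $\tau^l_\rt$, and makes essential use of the hypothesis that every component of $\bdy H$ has length at least $8$, which rules out switches on the short and medium edges and underlies both Lemma \ref{l:NoShort} and the validity of the push-off construction yielding a usable disk at a long edge.
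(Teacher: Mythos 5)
Your overall setup matches the paper: use Lemma \ref{l:NoShort} to eliminate short/medium vertices, take a full $l$-simplex $\sigma$, use Lemma \ref{l:LongExists} to pin down the extreme positions, and conclude by showing every interval between consecutive points of $H\cap l$ is hit. However, the mechanism you propose for the main step has two genuine gaps, and the correct mechanism is different in a substantive way.

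First, the claim that producing a disk $E_*$ at the missing position $j$, disjoint from every disk of $\sigma$, ``contradicts the fullness of $\sigma$'' is not valid. Fullness (Definition of an $e$-simplex being full) is a property of the \emph{triangulated sphere} $S$: $\sigma$ is full if it is not a face of another $l$-simplex \emph{of $S$}. The existence of a disk $E_*$ compatible with the disks of $\sigma$ shows that $\sigma\cup[E_*]$ spans a simplex in the abstract complex $[\CC\EE(H)]$, but says nothing about whether that simplex is present in $S$. To actually modify $S$ you must replace the star $\Sigma$ of $\sigma$ by a cone, and for that you need a disk compatible with \emph{every} vertex of $\Sigma$, not just those of $\sigma$. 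Your fallback half-acknowledges this, but the two ``cases'' are not really alternatives: the coning argument is the only move available.

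Second, and more importantly, even the coning version does not work with a disk $E_*$ \emph{incident to $l$}. Coning $\bdy\Sigma$ to $[E_*]$ with $E_*$ still incident to $l$ creates new $l$-simplices (any $l$-face of $\bdy\Sigma$ joined with $[E_*]$), so the quantity $\delta$ you wish to decrease does not decrease. The paper's argument escapes this precisely by producing a disk $E'$ incident to the \emph{opposite} long edge $l'$: it observes that $l$ has exactly one left switch, so of the two leftmost disks of $\EE$ flanking the gap $p$, at least one avoids the switch; that disk admits a \emph{second push-off} which travels across two faces and lands on $l'$. Being a push-off of a leftmost disk it is disjoint from all of $\EE$, and being built from $E$ together with a collar of $\bdy\Delta$ it is disjoint from every other disk of $\Sigma$ as well. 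Coning to this $[E']$ genuinely removes the $l$-simplex $\sigma$ without creating new ones. Relatedly, the description of ``marching across $l$ by iterated push-offs and second push-offs, switching between $\tau^l_\lt$ and $\tau^l_\rt$'' misreads the push-off operation: a push-off of an $l$-disk lands on a medium or short edge (or on $s'$, $m'$, $l'$ past the switch), never back on $l$, so there is no march along $l$ to a prescribed position $j$. The goal of the construction is not to fill the gap, but to exploit it.

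Your final accounting ($\dim\sigma\ge c-2$, hence $c\le n+1$, hence $|\bdy H|=4c\le4(n+1)$) is equivalent to the paper's and fine. The missing idea is the single-switch observation and the second push-off to $l'$, which is what makes the complexity $\delta$ actually decrease.
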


Note that when $H$ is simply connected, then in \cite{TopMinNormalII} we have already established that $|\bdy H|$ is precisely $4(n+1)$.

\begin{proof}
By Lemma \ref{l:LongExists}, there is a full $l$-simplex $\sigma$ of $S$. Let $\EE$ denote a collection of disks representing the vertices of $\sigma$. We claim that each point of $l$ between two points of $H \cap l$ meets some disk in $\EE$. Suppose not. Let $p$ be a point of $l$ between points of $H \cap l$ that does not meet any disk in $\EE$.  By Lemma \ref{l:LongExists}, there are disks in $\EE$ that meet $l$ on both sides of $p$. Thus, there is both a leftmost and a rightmost disk of $\EE$ on both sides of $p$. As there is only one left switch on $l$, there must then be a leftmost disk of $\EE$ that does not meet the left switch. Such a disk $E$ will have a second push-off to a disk $E'$ that meets the edge $l'$, where $E'$ is disjoint from every disk in $\EE$. 

The remainder of the proof is similar to that of Lemma \ref{l:LongExists}. Let $\Sigma$ denote the union of the simplices of $S$ that contain $\sigma$. Then by construction, every vertex of $\Sigma$ is connected by an edge to $[E]$ in $[\CC \EE(H)]$. We may therefore lower $c(S;l)$ by replacing $\Sigma$ with the cone on $\bdy \Sigma$ to the point $[E]$. 

Since $n$ is the index of $H$, the dimension of $S$ is $n-1$. Thus, every simplex of $S$ is spanned by at most $n$ vertices. Hence, $|\EE| \le n$. The points of $H \cap l$ separate $l$ into $|H \cap l|+1$ intervals. By the above argument, all but the first and last such intervals meets a disk in $\EE$. Thus, $|H \cap l|+1-2 \le n$, or $|H \cap l| \le n+1$. As $l$ is a long edge, $|\bdy H| =4|H \cap l|$. Putting this together then gives us $|\bdy H| \le 4(n+1)$, as desired.  
\end{proof}

%\begin{cor}
%If $\bdy H$ is not the boundary of a helicoid, then $|\bdy H| < 2(n+1)$. 
%\end{cor}

The following lemma takes us one more step closer to establishing the third conclusion of Theorem \ref{t:summary}. 

\begin{lem}
\label{l:OnlyLongConnected}
If $H$ is a surface in a tetrahedron with well-defined local index, and every component of $\bdy H$ has length at least $8$, then $H$ is connected. 
\end{lem}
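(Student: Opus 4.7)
Suppose for contradiction that $H$ has components $H_1,\ldots,H_c$ with $c \geq 2$. My plan is to revisit the proof of Theorem~\ref{t:NoLong}: I will exhibit an interval on a long edge $l$ that no edge-compressing disk for $H$ can possibly cover, and use this to cone away every $l$-simplex of a minimizing $(n-1)$-sphere $S \to [\CC\EE(H)]$, contradicting Lemma~\ref{l:LongExists}.

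Each $H_i$ has at least one boundary loop of length $\geq 8$, and such a loop crosses every long edge $l$ at least twice, so $|H_i \cap l| \geq 2$ for each $i$ and the crossings of $H \cap l$ collectively come from at least two distinct components. Ordering these crossings along $l$ and grouping them into maximal runs of same-component crossings produces at least two ``blocks,'' so there exists a middle interval $I \subset l$ whose two adjacent crossings $p_1,p_2$ lie on distinct components $H_i,H_j$. No edge-compressing disk $E$ for $H$ can satisfy $E \cap \TT^1 = I$: the arc $\alpha = E \cap H$ would then be connected with endpoints on two different components, which is impossible.

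Next, choose a triangulated sphere $S \to [\CC\EE(H)]$ realizing the local index and (as in Theorem~\ref{t:NoLong}) minimize the total number of $l$-simplices over all such choices. By Lemma~\ref{l:LongExists} there is a full $l$-simplex $\sigma$, and an interior point $p$ of $I$ fails to meet the $\beta$-arc of any disk of the collection $\EE$ corresponding to $\sigma$. Applying the leftmost/rightmost push-off argument from the proof of Theorem~\ref{t:NoLong} yields a disk $E'$ on the opposite long edge $l'$, disjoint from every disk represented in the star $\Sigma$ of $\sigma$. Replacing $\Sigma$ by the cone of $\bdy\Sigma$ to $[E']$ strictly decreases the number of $l$-simplices of $S$, contradicting the assumed minimality.

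The principal technical difficulty is that the push-off construction in Theorem~\ref{t:NoLong} relies on the fact that $l$ has only one left switch, whereas a disconnected $H$ may contribute extra left switches (roughly one per between-block interval on $l$). I expect this obstruction to be surmountable by selecting the leftmost disk of $\EE$ in the block immediately to the left of $I$: its second push-off traverses only the switches associated with that block's own component and therefore naturally sidesteps the switches coming from the other components. Verifying this variant of the leftmost-disk selection is the only nontrivial step in the plan.
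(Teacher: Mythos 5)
Your argument is essentially the paper's: the proof of Theorem \ref{t:NoLong} (which is stated and proved for possibly disconnected $H$) already establishes that every interval of $l$ between consecutive points of $H \cap l$ meets an edge-compressing disk represented by a vertex of $S$, and since the arc $E \cap H$ of such a disk is connected, consecutive crossings lie on the same component of $H$; chaining along $l$ and noting that every boundary component of length at least $8$ meets $l$ gives connectivity, which is exactly the paper's (direct rather than contrapositive) phrasing. The ``principal technical difficulty'' you flag is not actually one: the switch structure on $l$ is determined solely by the edge weights $a$, $b$, $c=a+b$ of the curve system $\bdy H$ --- in $\tau^l_\lt$ the $c$ normal arcs meeting $l$ fall into exactly two parallel families (of sizes $a$ and $b$), so there is exactly one left switch regardless of how the curves assemble into components of $H$ --- hence the covering claim from Theorem \ref{t:NoLong} can simply be cited verbatim rather than re-derived with a modified leftmost-disk selection.
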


\begin{proof}
Let $S$ be as above. According to the proof of Theorem \ref{t:NoLong},  each point $p$ of $l$ between two points of $H \cap l$ meets some edge-compressing disk $E$ represented by a vertex of $S$. Thus, $E \cap H$ is a path in $H$ connecting the points of $H \cap l$ on either side of $p$. It follows that these points are on the same component of $H$. As this is true for each consecutive pair of points of $H \cap l$, we conclude that all of these points are on the same component of $H$. The result thus follows. 
\end{proof}

\section{Compressing away triangular boundary components}

In this section we assume $H$ is normally connected, and thus has no components that are normal triangles or quadrilaterals. Let $v$ be a vertex of $\Delta$, and suppose $\bdy H$ has a component of length 3 which separates $v$ from the other boundary components of $H$. This length 3 component bounds a subdisk $C_0$ of $\bdy \Delta$ than contains $v$. As $H$ has no components that are normal triangles, $C_0$ is isotopic to a compressing disk for $H$.

\begin{lem}
\label{l:SunburstExists}
Every essential sphere $S$ in $[\CC \EE(H)]$ has a vertex that represents an edge-compressing disk that meets $C_0$. 
\end{lem}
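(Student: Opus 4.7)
The plan is to proceed by contradiction: assume $S$ is an essential sphere in $\plex{\CC \EE(H)}$ no vertex of which represents an edge-compressing disk meeting $C_0$. Then every vertex of $S$ is either a compressing disk or an edge-compressing disk disjoint from $C_0$. I aim to show that $[C_0]$ can be coned onto $S$, i.e., for each simplex $\sigma$ of $S$ the disks representing $\sigma$ admit a common representative of $C_0$ simultaneously disjoint from them away from $N(\TT^1)$. Once this is established, the cone-replacement move that appears throughout the proofs of Lemma \ref{l:NoShort}, Lemma \ref{l:LongExists}, and Theorem \ref{t:NoLong} allows $S$ to be replaced by the single vertex $[C_0]$, making $\iota$ null-homotopic and contradicting essentiality.

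The edge-compressing case is immediate from the hypothesis. For a compressing disk $D$ representing a vertex of $S$, the key observation is that $\bdy C_0$ is a length-3 component of $\bdy H$ lying on $\bdy \Delta$, while $\bdy D$ is an essential loop on $H$ that either lies in the interior of $H$, is a different boundary component of $H$, or equals $\bdy C_0$. In the last case $[D]=[C_0]$, since two embedded disks in the ball $\Delta$ bounded by the same curve on $\bdy \Delta$ are isotopic rel boundary. In the first two cases $\bdy D \cap \bdy C_0 = \emptyset$, so after minimizing $|D \cap C_0|$ the intersection consists only of simple closed curves in the interior of $\Delta$. An innermost such loop $\gamma$ on $C_0$ bounds a subdisk $C_0^{in} \subset C_0$ on the side not containing $\bdy C_0$, and $\gamma$ also bounds a subdisk $D^{in} \subset D$ on the side not containing $\bdy D$. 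Both $C_0^{in}$ and $D^{in}$ are disjoint from $H$ (since $C_0$ and $D$ are compressing disks), so $C_0^{in} \cup D^{in}$ is a sphere cobounding a ball in $\Delta$ across which $C_0^{in}$ may be pushed, reducing $|D \cap C_0|$.

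The main obstacle is upgrading this to simultaneous disjointness for a whole simplex $\sigma$ of $S$, since the cone-replacement move requires that $C_0$, together with a single choice of representatives for the vertices of $\sigma$, span a simplex. Because the disks of $\sigma$ are pairwise disjoint, at each reduction step the innermost loop lies on a single $D_i \in \sigma$, and the ball $B$ cobounded by $C_0^{in}$ and $D_i^{in}$ has $\bdy B$ disjoint from every other $D_j \in \sigma$; the push of $C_0^{in}$ across $B$ therefore does not introduce new intersections with the $D_j$. The only residual complication is that $B$ could contain closed components of $H$, which would be crossed by the isotopy, but these can be surgered away via innermost loops of a secondary $C_0 \cap H$ intersection before completing the push. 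Iterating produces a representative of $C_0$ simultaneously disjoint from every element of $\sigma$, and the cone-replacement argument then contradicts the essentiality of $S$.
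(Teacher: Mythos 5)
Your argument is correct in outline, but it re-derives from scratch a fact the paper gets for free. The paper's proof is a one-liner built on a single observation: $C_0$, being a push-in of a subdisk of $\bdy \Delta$, is isotopic into $\bdy \Delta$, and a disk isotopic into $\bdy \Delta$ is automatically disjoint from every compressing disk for $H$ (which lives in the interior of $\Delta$). Combined with the contradiction hypothesis that no vertex of $S$ represents an edge-compressing disk meeting $C_0$, this immediately makes $[C_0]$ a cone point for $S$. The same ``isotopic into $\bdy \Delta$, hence disjoint'' observation is used as a one-liner elsewhere in the paper, for instance in the proof of Lemma~\ref{l:LongExists}. Your innermost-disk swap reconstructs the disjointness of $C_0$ from each compressing disk by a general position and surgery argument, which works in a ball, but it is doing a lot of work to recover something that is visible directly from the fact that $C_0$ comes from $\bdy \Delta$. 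To your credit, you identify and handle the simultaneous-disjointness issue for a whole simplex of $S$ that the coning move requires, a point the paper's phrasing absorbs implicitly (once $C_0$ is pushed to within $\epsilon$ of $\bdy \Delta$, one representative works for all disks in a given simplex at once).

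One small slip: you write that $\bdy D$ for a compressing disk $D$ could be ``a different boundary component of $H$'' or equal to $\bdy C_0$. By definition $D \cap H = \bdy D$ is an essential loop on $H$ lying in the interior of $\Delta$, hence in the interior of $H$; it is never a component of $\bdy H$, and $\bdy C_0$ (after the push-in) is likewise an interior loop parallel to, but not equal to, the length-3 boundary component. This confusion does not propagate into an error since your surgery argument treats all cases uniformly, but it suggests you should revisit how the compressing disk $C_0$ is obtained from the subdisk of $\bdy \Delta$.
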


\begin{proof}
As $C_0$ is isotopic into $\bdy \Delta$ it is disjoint from every other compressing disk for $H$. If it is also disjoint from every edge-compressing disk represented by a vertex of $S$, then $S$ is homotopic to the point $[C_0]$ in $[\CC \EE(H)]$.
\end{proof}

Let $\Delta_v$ denote the union of the 2-simplices of $\Delta$ that contain $v$. Let $\Delta^1$ denote $\Delta$ with a neighborhood of the 1-skeleton removed, and $H^1=H \cap \Delta^1$. Let $S$ be an essential sphere in $[\CC \EE(H)]$. Let $[E_0]$ be a vertex of $S$, where $E_0$ is an edge-compressing disk that meets $C_0$ (as guaranteed to exist by Lemma \ref{l:SunburstExists}). Let $\CC_0$ denote the set which contains the single element $C_0$. Let $\EE_0$ be a set of edge-compressing disks that meets $C_0$, includes $E_0$, and represents the vertices of a simplex of $S$.

Now suppose $E_1, E_2 \in \EE_0$. Then $E_1$ and $E_2$ each meet $\bdy C_0$ in a point. If the subarc of $\bdy C_0$ connecting these points does not meet any other disk in $\EE_0$, then we say $E_1$ and $E_2$ are {\it consecutive}. If the subarc of $\bdy C_0$ connecting consecutive disks only meets the edges of $\Delta$ that either $E_1$ or $E_2$ meet, then we say $E_1$ and $E_2$ are {\it neighbors.} Neighboring disks $E_1$ and $E_2$ each meet some other points of $\bdy H^1$. If these two points are connected by a subarc of $\bdy H^1$ that lies in $\Delta_v$, then we say $E_1$ and $E_2$ are {\it adjacent}. Otherwise, $E_1$ and $E_2$ are {\it non-adjacent}.

When $E_1$ and $E_2$ are adjacent neighbors, then the arcs $E_1 \cap \bdy \Delta^1$ and $E_2 \cap \bdy \Delta^1$, together with two subarcs of $\bdy H^1$, bound a rectangle $Q$ in $\bdy \Delta^1$, as in Figure \ref{f:adjacentneighbors}. If the disk $C=E_1 \cup Q \cup E_2$ is a compressing disk for $H$, then we say $C$ is {\it defined} by $E_1$ and $E_2$. 

\begin{figure}
\psfrag{E}{$E_1$}
\psfrag{e}{$E_2$}
\psfrag{f}{$E$}
\psfrag{Q}{$Q$}
\includegraphics[width=4.75in]{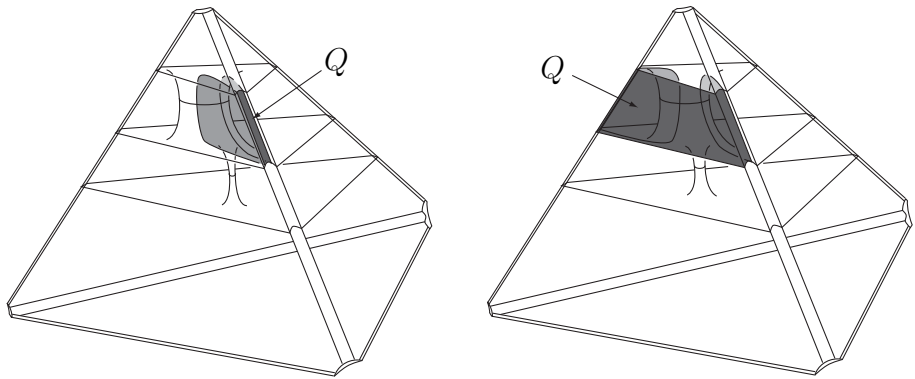}
\caption{The rectangle $Q \subset \bdy \Delta^1$ when $E_1$ and $E_2$ are adjacent neighbors. In the figure on the left, $E_1$ and $E_2$ are incident to the same edge of $\Delta$. In the figure on the right, they are incident to different edges.}
\label{f:adjacentneighbors}
\end{figure}

When $E_1$ and $E_2$ are non-adjacent neighbors, then the arcs $E_1 \cap \bdy \Delta^1$ and $E_2 \cap \bdy \Delta^1$, together with three subarcs of $\bdy H^1$ and a subarc of $\bdy \Delta_v$, bounds a hexagonal region $Q$ of $\bdy \Delta^1$, as in Figure \ref{f:nonadjacentneighbors}. In this case, the disk $C=E_1 \cup Q \cup E_2$ is an edge-compressing disk for $H$, that we say is {\it defined} by $E_1$ and $E_2$. 

\begin{figure}
\psfrag{E}{$E_1$}
\psfrag{e}{$E_2$}
\psfrag{f}{$E$}
\psfrag{Q}{$Q$}
\includegraphics[width=3in]{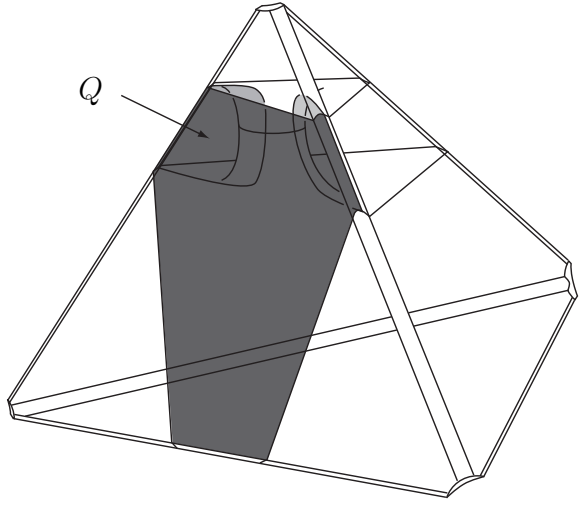}
\caption{The hexagon $Q \subset \bdy \Delta^1$ when $E_1$ and $E_2$ are non-adjacent neighbors.}
\label{f:nonadjacentneighbors}
\end{figure}

We now let $\CC_1$ denote the set of all compressing disks for $H$ defined by pairs of adjacent neighbors in $\EE_0$, together with the set of all edge-compressing disks defined by non-adjacent neighbors in $\EE_0$. Let $\EE_1$ be a set of edge-compressing disks such that
	\begin{itemize}
		\item for each $E \in \EE_1$ there is a $C \in \CC_1$ such that $E \cap C \ne \emptyset$, and
		\item the disks of $\EE_0 \cup \EE_1$ are represented by the vertices of a simplex in $S$.
	\end{itemize}

For example, Figure \ref{f:E1} depicts a surface $H$ and a set $\EE_0$ with three disks. Three pairs of these disks are consecutive, two of those pairs are neighbors, and one of those neighbors defines the compressing disk for the lower tube in the figure, which would then be an element of $\CC_1$. Also depicted in the figure is a single edge-compressing disk that meets this compressing disk, which is hence an element of $\EE_1$. Note that all the disks in $\EE_0 \cup \EE_1$ are disjoint, and so are represented by the vertices of a simplex in $[\CC \EE(H)]$. 

\begin{figure}
\includegraphics[width=3in]{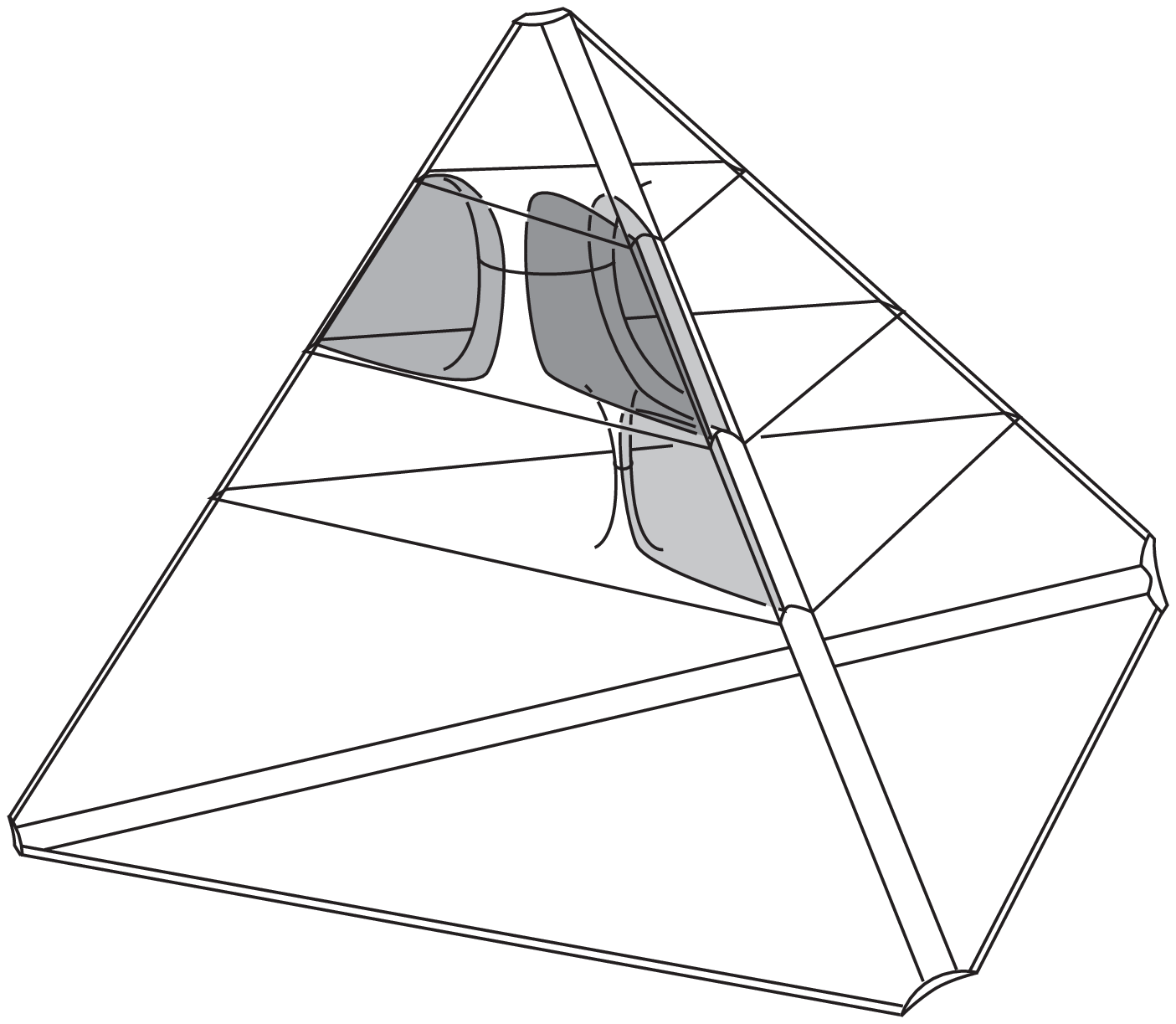}
\caption{Three disks in $\EE_0$ and one in $\EE_1$.}
\label{f:E1}
\end{figure}

We now proceed in the same fashion. Two disks of $\EE_1$ are consecutive if they meet the same disk in $\CC_1$, and there are no other disks of $\EE_1$ between them. The terms {\it neighbor}, {\it adjacent} and {\it non-adjacent} are defined as before, as are compressing disks defined by adjacent neighbors, and edge-compressing disks defined by non-adjacent neighbors. In this way we construct sequences of sets $\{\CC_0, \CC_1, \CC_2, ...\}$ and $\{\EE_0, \EE_1, \EE_2, ...\}$, such that for each $i \ge 1$,

\begin{itemize}
	\item the set $\CC_i$ consists of all compressing disks defined by adjacent neighbors in $\EE_{i-1}$, and edge-compressing disks defined by non-adjacent neighbors in $\EE_{i-1}$, 
	\item the set $\EE_i$ consists of edge-compressing disks such that for each $E \in \EE_i$ there is a $C \in \CC_i$ where $E \cap C \ne \emptyset$, and
	\item the disks of $\bigcup \limits _{j=0} ^i \EE_j$ are represented by the vertices of a simplex in $S$.
\end{itemize}

\begin{dfn}
A simplex $\sigma \subset S$ whose vertices represent the disks $\bigcup \EE_i$ as defined above is called a {\it sunburst of $S$}.
\end{dfn}

\begin{dfn}
Suppose $\sigma$ is a sunburst and $\{\EE_i\}_{i=0}^n$ is the sequence of sets of disks represented by its vertices. Then the {\it sunburst complexity} of $\sigma$ is the $(n+1)$-tuple
\[c(\sigma)=(|\EE_0|,|\EE_1|, ..., |\EE_n|).\]
Two such $(n+1)$-tuples are compared lexicographically.
\end{dfn}

\begin{dfn}
When $\sigma$ is a sunburst that is not a face of any other sunburst, then we say it is {\it full}.
\end{dfn}

\begin{dfn}
The {\it sunburst complexity} of $S$ is defined to be the set of $(n+1)$-tuples
\[\{c(\sigma)|\sigma \mbox{ is a full sunburst of $S$} \}.\] 
We order this set in non-increasing order, and compare two such sets lexicographically. We say an essential sphere $S$ is {\it sunburst minimal} if there is no sphere $S'$ homotopic to $S$ that has lower sunburst complexity. 
\end{dfn}

\begin{lem}
\label{l:NoEmptyTerminals}
Suppose $S$ is sunburst minimal, $\sigma$ is a full sunburst of $S$, $\{\EE_i\}$ is the sequence of sets of disks represented by its vertices, and for each $i$,  $\CC_i$ is the set of compressing and edge-compressing disks defined by neighboring disks in $\EE_i$. Then for each $i \ge 1$ and each disk $C \in \CC_i$, there is a disk $E \in \EE_i$ that meets $C$. 
\end{lem}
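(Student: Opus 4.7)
The plan is to argue by contradiction using sunburst minimality. Suppose some $C \in \CC_i$ is not met by any $E \in \EE_i$. I aim to construct a sphere $S'$ homotopic to $S$ with strictly lower sunburst complexity, contradicting the minimality assumption. The first step would be to show that, after a small isotopy, the vertex $[C]$ is connected by an edge in $\plex{\CC \EE(H)}$ to every vertex of the star $\Sigma$ of $\sigma$ in $S$. Since $C$ is obtained by banding two neighboring disks $E_1, E_2 \in \EE_{i-1}$ through a rectangular or hexagonal region $Q \subset \bdy \Delta^1$, a push-off makes $C$ disjoint from $E_1$ and $E_2$ themselves. For the remaining disks of $\bigcup_j \EE_j$ represented by the other vertices of $\sigma$, the recursive neighbor structure restricts $Q$ to the portion of $\bdy \Delta^1$ lying strictly between $\bdy E_1$ and $\bdy E_2$ on the relevant piece of $\bdy C'$ for $C' \in \CC_{i-1}$; this region is not met by the boundary arcs of other disks in $\EE_{i-1}$ (since $E_1, E_2$ are neighbors) and is disjoint from disks in $\EE_j$ for $j<i-1$ by the nested geometry. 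For $j \ge i$, the assumption that no $E \in \EE_i$ meets $C$, together with the recursive construction, rules out intersections with $Q$. Finally, vertices of $\Sigma - \sigma$ represent disks already disjoint from $\bigcup_j \EE_j$, and a parallel argument makes them disjoint from $C$.

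Next I would replace $\Sigma$ in $S$ with the cone on $\bdy \Sigma$ to $[C]$, producing a sphere $S'$ homotopic to $S$. The interior vertices of $\sigma$ are no longer present in $S'$, so the original full sunburst of complexity $(|\EE_0|, \dots, |\EE_n|)$ is destroyed. I would then verify that $S'$ has strictly smaller sunburst complexity: any new full sunburst of $S'$ that uses the cone vertex $[C]$ either terminates at a smaller depth (since $C$ can be introduced only at a level $\EE'_j$ with $j \le i-1$, forcing its complexity tuple to be strictly lex-smaller than that of $\sigma$) or cannot include $[C]$ at all when $C$ happens to be a compressing disk, since the sets $\EE'_j$ consist only of edge-compressing disks. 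In either case the set of complexity tuples of $S'$ is lexicographically dominated by that of $S$, contradicting the assumed sunburst minimality.

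The main obstacle is the first step: verifying that $C$ can be isotoped to be disjoint from every disk represented by a vertex of $\Sigma$. This requires carefully tracking the location of $Q$ on $\bdy \Delta^1$ against the boundary arcs of disks at every level of the sunburst, and combining the neighbors condition, the hypothesis that no $E \in \EE_i$ meets $C$, and the pairwise disjointness guaranteed by the simplex in $S$. A secondary concern is the complexity verification in the final step: one must show that the introduced vertex $[C]$ does not give rise to any full sunburst of $S'$ whose complexity tuple meets or exceeds that of $\sigma$, which requires enumerating the possible sunbursts in which $[C]$ can participate and bounding their lengths and entries.
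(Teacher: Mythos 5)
Your overall strategy coincides with the paper's: supposing some $C \in \CC_i$ misses every disk of $\EE_i$, show that $[C]$ is joined by an edge to every vertex of the star $\Sigma$ of $\sigma$, replace $\Sigma$ by the cone of $\bdy \Sigma$ to $[C]$, and contradict sunburst minimality. The gap is exactly in the step you flag as the main obstacle, and the purely geometric route you propose for it cannot close it. Consider an edge-compressing disk $F$ represented by a vertex of $\Sigma - \sigma$. It is disjoint from every disk of $\bigcup_j \EE_j$, but nothing in the geometry prevents its arc on the frontier of the relevant edge from lying \emph{between} the arcs of $E_1$ and $E_2$: the ``consecutive'' and ``neighbor'' conditions constrain only disks that already belong to $\EE_{i-1}$, and say nothing about $F$. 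Such an $F$ runs straight through the region $Q$ and therefore meets $C = E_1 \cup Q \cup E_2$, so no small isotopy will separate them and your ``parallel argument'' has nothing to work with. What actually rules this configuration out is combinatorial, not geometric: if $F$ met $C \in \CC_i$, then $F$ would satisfy the defining condition for membership in $\EE_i$, so the join of $\sigma$ with $[F]$ would again be a sunburst, contradicting the hypothesis that $\sigma$ is \emph{full}. This is the one place where fullness enters the paper's proof of this lemma, and it is the idea missing from your outline; without it the first step cannot be completed.

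Two smaller points. First, your justification that disks of $\EE_j$ for $j>i$ avoid $Q$ by ``the recursive construction'' is at the same level of terseness as the paper's own assertion, so I will not press it, but note that it rests on the nesting of the sunburst tree, not on the hypothesis about $\EE_i$ alone. Second, in the complexity count, your claim that $[C]$ ``can be introduced only at a level $\EE_j'$ with $j \le i-1$'' is unsupported and not needed: when $C$ is a compressing disk it can never be a vertex of any sunburst, since every $\EE_j'$ consists of edge-compressing disks, and this is how one sees that every full sunburst of the coned sphere has strictly fewer edge-compressing disks than $\sigma$. You are right, however, that the case where $C$ is an edge-compressing disk defined by non-adjacent neighbors needs separate attention at this stage, since Lemma \ref{l:adjacent} is proved only afterwards and itself relies on this lemma.
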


\begin{proof}
Let $\Sigma$ be the union of simplices in $S$ that contain $\sigma$. A vertex $[F]$ in $\Sigma-\sigma$ represents a disk $F$ that is disjoint from every element of $\bigcup \EE_i$. Suppose there is an $i$ such that $\CC_i$ contains a disk $C$ that is disjoint from $F$. Then adding $F$ to $\EE_i$ defines a new sunburst which is the join of $\sigma$ and $[F]$, contradicting our assumption that $\sigma$ was full. We conclude that each disk $C \in \CC_i$ is disjoint from every disk represented by a vertex of $\Sigma-\sigma$. 

Now suppose $C$ is a disk in $\CC_i$ that does not meet any disk in $\EE_i$. Such a disk $C$ will then be disjoint from every element of $\bigcup \EE_i$, and thus $C$ is disjoint from every disk represented by a vertex of $\Sigma$. We may thus obtain a sphere homotopic to $S$ by replacing $\Sigma$ with the cone of $\bdy \Sigma$ to the point $[C]$, $\bdy \Sigma \ast [C]$. Since $C$ is a compressing disk, any full sunburst in $\bdy \Sigma \ast [C]$ will have fewer edge-compressing disks, and thus a smaller sunburst complexity. 
\end{proof}

\begin{lem}
\label{l:adjacent}
Suppose $S$ is sunburst minimal,  $\sigma$ is a full sunburst of $S$, and $\{\EE_i\}$ is the sequence of sets of disks represented by its vertices. Then for each $i$, all neighbors in $\EE_i$ are adjacent. 
\end{lem}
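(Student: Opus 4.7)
The plan is to argue by contradiction, in the same spirit as the coning arguments used in Lemmas \ref{l:NoShort} and \ref{l:NoEmptyTerminals}. Suppose that some layer $\EE_i$ of the sunburst $\sigma$ contains a pair of non-adjacent neighbors $E_1, E_2$. The construction preceding the lemma then produces an edge-compressing disk $C = E_1 \cup Q \cup E_2 \in \CC_{i+1}$, where $Q$ is the hexagonal region of $\bdy \Delta^1$ cobounded by the boundary arcs of $E_1, E_2$, two subarcs of $\bdy H^1$, and a subarc of $\bdy \Delta_v$.

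The first step is to push $C$ slightly into the interior of $\Delta$, obtaining an edge-compressing disk $C'$ in the same equivalence class of $\plex{\CC \EE(H)}$ as $C$, but disjoint from both $E_1$ and $E_2$. The key geometric claim is then that $C'$ is disjoint from every disk represented by a vertex of the star $\Sigma$ of $\sigma$ in $S$. The disks in $\bigcup_j \EE_j$ are pairwise disjoint away from a neighborhood of $\TT^1$, so any such disk $F$ other than $E_1, E_2$ must avoid both $E_1, E_2$ and must avoid the interior of $Q$; pushing $Q$ off into $\Delta$ preserves these disjointnesses. Vertices of $\Sigma - \sigma$ represent disks disjoint from all of $\bigcup_j \EE_j$ by the argument in the proof of Lemma \ref{l:NoEmptyTerminals}, and so are also disjoint from $C'$.

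With this disjointness established, I would replace $\Sigma$ in $S$ by the cone $\bdy \Sigma * [C']$, obtaining a homotopic sphere $S'$. The next task is to show that $S'$ has strictly smaller sunburst complexity than $S$. The original sunburst $\sigma$ is destroyed, and any new full sunburst of $S'$ involving the vertex $[C']$ must place $C'$ in some layer $\EE_j'$ of its sunburst decomposition. The critical observation is that $C'$, being an edge-compressing disk already occupying the position of $C \in \CC_{i+1}$, can be used to anchor the disks of $\EE_{i+1}$ that originally met $C$; in the reorganized sunburst one of $E_1$ or $E_2$ (say $E_2$) becomes superfluous at layer $i$, strictly lowering $|\EE_i|$ while leaving all earlier $|\EE_0|, \ldots, |\EE_{i-1}|$ unchanged. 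Since complexities are compared lexicographically on the non-increasing sequence of full sunburst tuples, this yields a sphere of lower sunburst complexity than $S$, contradicting the assumption that $S$ is sunburst minimal.

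The main obstacle is the last step: rigorously tracking how the downstream sets $\CC_{i+1}, \EE_{i+1}, \CC_{i+2}, \ldots$ reorganize once $E_1, E_2$ are replaced by $C'$, and verifying that no \emph{new} full sunburst of larger complexity appears elsewhere in $S'$. This requires a careful case analysis, separating disks in deeper layers according to whether they were anchored to $C$ or to another element of $\CC_{i+1}$, and checking in each case that the new full sunbursts through $[C']$ have complexities dominated (in the lex order on sorted tuples) by the original ones. Once this bookkeeping is carried out, the contradiction with sunburst minimality establishes the lemma.
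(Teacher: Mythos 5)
Your approach has a genuine gap, and it is precisely Lemma \ref{l:NoEmptyTerminals} that kills it. You propose to cone $\Sigma$ to the point $[C']$, where $C'$ is a push-in of the edge-compressing disk $C = E_1 \cup Q \cup E_2 \in \CC_{i+1}$ defined by the non-adjacent neighbors. For this to yield a homotopic sphere you need $C'$ to be disjoint (away from $\TT^1$) from every disk represented by a vertex of $\Sigma$. But $\sigma$ is a \emph{full} sunburst, so by Lemma \ref{l:NoEmptyTerminals} there is a disk $E \in \EE_{i+1}$, represented by a vertex of $\sigma$, that meets $C$. Since $E$ is disjoint from $E_1$ and $E_2$, it must cross the hexagon $Q$, and this crossing survives any small push-in of $Q$ into the interior of $\Delta^1$. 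So $C'$ still meets $E$, your claimed disjointness fails, and the cone replacement is not even defined. The same lemma that you lean on for the disjointness of vertices of $\Sigma-\sigma$ from $\bigcup_j\EE_j$ is the obstruction: the sunburst construction is designed so that every terminal $\CC$-disk is ``blocked'' by an $\EE$-disk, and $C$ is exactly such a terminal disk.

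The paper's proof turns this obstruction into the mechanism of the argument. The disk $E$ guaranteed by Lemma \ref{l:NoEmptyTerminals} is incident to one of the two edges (say $e$) meeting $E_1$ or $E_2$, and because $E$ avoids $E_1$, $E_2$, and the hexagon's boundary, it must lie on the far side of the switch on $e$. This forces the existence of a rightmost disk $E'$ of $\sigma$ on $e$ whose push-off $E^*$ is incident to an edge of $\Delta$ \emph{not} containing $v$. That single geometric fact does two jobs at once: it makes the disjointness of $E^*$ from every vertex of $\Sigma$ automatic (push-offs of rightmost disks only add a rectangle of $\bdy\Delta^1$), and it makes the complexity decrease automatic (a disk incident to an edge away from $v$ can never participate in a sunburst, so every full sunburst of the coned sphere is spanned by a proper subset of the vertices of $\sigma$). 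Your proposal also leaves the complexity bookkeeping as an unverified ``careful case analysis''; even setting aside the disjointness failure, nothing in your setup guarantees that the new full sunbursts through $[C']$ are smaller, since $C'$ is still an edge-compressing disk in the neighborhood of $v$ and could in principle be absorbed into a sunburst. Replacing the cone point $[C']$ with the push-off $[E^*]$ is not an optimization of your argument but the actual missing idea.
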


\begin{proof}
Suppose $E_1$ and $E_2$ are non-adjacent neighbors in $\EE_i$, and let $C$ denote the edge-compressing disk in $\CC_{i+1}$ that they define. By Lemma \ref{l:NoEmptyTerminals}, $C$ meets some disk $E \in \EE_{i+1}$. Since $E_1$ and $E_2$ are non-adjacent, they must be incident to different edges of $\Delta$. The disk $E$ will then be incident to the same edge as $E_1$, or the same edge as $E_2$. Assume the former, and let $e$ denote the edge of $\Delta$ that both $E_1$ and $E$ meet. 

\begin{figure}
\psfrag{Q}{$Q$}
\psfrag{E}{$E_1$}
\psfrag{e}{$E$}
\[\includegraphics[width=3in]{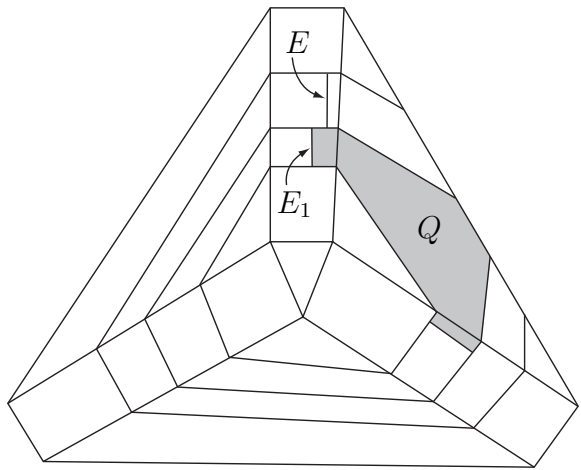}\]
\caption{The disk $E$ is further right than the disk $E_1$, which meets the right switch.}
\label{f:NoNonAdjacent}
\end{figure}

Comparing Figures \ref{f:Switches} and \ref{f:nonadjacentneighbors}, we see that $E_1$ meets $R_e$ (the frontier of $N(e)$) in either a right or left switch. Note that since $E$ is disjoint from $E_1$ and $E_2$, then it must meet the disk $Q$, where $C=E_1 \cup Q \cup E_2$ and $Q$ is the shaded region of $\Delta^1$ depicted in Figure \ref{f:nonadjacentneighbors}. The three simplices of $\bdy \Delta^1$ that are incident to the vertex $v$ are depicted in Figure \ref{f:NoNonAdjacent}, with $Q$ depicted again as a shaded region. From the figure, we see that the disk $E$ is to the right of $E_1$ on $e$. Hence, the simplex  $\sigma$ will have a rightmost disk $E'$ on $e$ which has a push-off $E^*$ to its right, which is an edge-compressing disk incident to an edge that does not meet the vertex $v$. By construction then, $E^*$ will be disjoint from every disk represented by a vertex of $\sigma$. 

Let $\Sigma$ be the union of simplices in $S$ that contain $\sigma$. Let $[F]$ be a vertex of $\Sigma -\sigma$. As $E^*$ is a push-off, it consists of the disk $E'$, together with a rectangular subregion of $\bdy \Delta^1$, as depicted in Figure \ref{f:PushOff}. Hence, if $F$ meets $E^*$ it must also meet $E'$. However, $[F]$ is by definition connected by an edge to each vertex of $\sigma$, and thus $F$ is disjoint from every disk represented by a vertex of $\sigma$. In particular, $F$ is disjoint from $E'$. We conclude $F$ must also be disjoint from $E^*$. 

We conclude $E^*$ is disjoint from every disk represented by a vertex of $\Sigma$. We may thus obtain a sphere homotopic to $S$ by replacing $\Sigma$ with the cone of $\bdy \Sigma$ to the point $[E^*]$. Every full sunburst simplex of such a cone is spanned a proper subset of the vertices of $\sigma$. Hence, any resulting full sunburst will have lower sunburst complexity. 
\end{proof}

\begin{thm}
\label{t:TriangleCompress}
Suppose $S$ is sunburst minimal, $\sigma$ is a full sunburst of $S$, $\{\EE_i\}$ is the sequence of sets of disks represented by its vertices, and for each $i$,  $\CC_i$ is the set of compressing disks defined by neighboring disks in $\EE_i$.  Then the local index of $H/ \bigcup \CC_i$ is at most $n-\left| \bigcup \EE_i \right|$.
\end{thm}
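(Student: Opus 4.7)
The plan is to exhibit a non-trivial map from a sphere of the appropriate dimension into $[\CC \EE (H/\bigcup\CC_i)]$, using the link of $\sigma$ in $S$. Let $m = |\bigcup \EE_i|$ and $H' = H/\bigcup\CC_i$. Since $\sigma$ is an $(m-1)$-simplex of the triangulated $(n-1)$-sphere $S$, its link $L \subset S$ is a triangulated $(n - m - 1)$-sphere. Producing a non-trivial simplicial map $L \to [\CC \EE(H')]$ will show that $\pi_{n-m-1}([\CC \EE(H')])$ is non-trivial, and hence that the local index of $H'$ is at most $n - m$.

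First I would construct a simplicial map $\phi \colon L \to [\CC \EE(H')]$. Each vertex $[F]$ of $L$ represents a disk $F \in \CC \EE(H)$ disjoint from every disk of $\bigcup \EE_i$. By Lemma~\ref{l:adjacent}, for $i \ge 1$ each $C \in \CC_i$ has the form $E_1 \cup Q \cup E_2$ with $E_1, E_2 \in \EE_{i-1}$ and $Q \subset \bdy\Delta^1$ a rectangle, while $C_0$ is isotopic into $\bdy \Delta$. Using standard innermost-disk arguments inside the ball $\Delta$, I would isotope $F$ to be disjoint from every disk of $\bigcup \CC_i$ while keeping it disjoint from $\bigcup \EE_i$; for a simplex $[F_1, \ldots, F_k]$ of $L$ the isotopies can be chosen to keep the disks pairwise disjoint. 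Because the compression $H \mapsto H'$ only modifies $H$ in a neighborhood of $\bigcup \CC_i$, the isotoped $F$ defines a vertex in $[\CC \EE(H')]$; setting $\phi([F])$ to be this vertex gives the desired map.

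To prove non-triviality I would argue by contradiction against sunburst minimality. If $\phi(L)$ is null-homotopic in $[\CC \EE(H')]$, it bounds a simplicial $(n-m)$-ball $B$ there. Reversing the isotopy above, each vertex of $B$ lifts to a disk for $H$ disjoint from $\bigcup \CC_i \cup \bigcup \EE_i$, so $\sigma \ast \tilde B$ is a ball in $[\CC \EE(H)]$, where $\tilde B$ is the lifted ball. Now replace the star of $\sigma$ in $S$ --- namely $\sigma \ast L$ --- with $\bdy\sigma \ast \tilde B$. Because $(\sigma \ast L) \cup (\bdy\sigma \ast \tilde B) = \bdy(\sigma \ast \tilde B)$ bounds the ball $\sigma \ast \tilde B$, the new sphere $S'$ is homotopic to $S$. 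The full sunburst $\sigma$ is no longer full in $S'$, so the tuple $c(\sigma)$ is removed from the sunburst complexity. A vertex of $\tilde B$ is disjoint from $C_0$ and from every disk in $\bigcup \EE_i$, so by induction on $j$ it cannot belong to any $\EE'_j$ of any sunburst of $S'$. Hence any new full sunburst of $S'$ uses only a proper subset of the vertices of $\sigma$ and has strictly smaller complexity than $c(\sigma)$. The sunburst complexity of $S'$ is therefore lexicographically strictly less than that of $S$, contradicting sunburst minimality.

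The main technical hurdle is in the first step: verifying that after the isotopy, $F$ really represents a vertex of $[\CC \EE(H')]$. In particular, if $F$ is a compressing disk, the loop $\bdy F$ could in principle become inessential on $H'$ because some scar from compressing along $\bigcup \CC_i$ bounds it. Ruling this out requires using the concrete placement of the $C \in \bigcup \CC_i$ near the 1-skeleton, together with the hypothesis of sunburst minimality, to prevent $\bdy F$ from enclosing a scar on $H'$. A parallel verification is needed in the second step when lifting disks from $B$ back to disks for $H$.
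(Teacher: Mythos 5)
Your overall skeleton matches the paper's: both take the link $L$ (the paper's $\Sigma - \sigma$) of the full sunburst $\sigma$, show it maps into $[\CC\EE(H/\bigcup\CC_i)]$, and then argue by contradiction with sunburst minimality that this map is essential by replacing the star of $\sigma$ with $\bdy\sigma \ast (\text{a lifted ball})$. However, there is a genuine gap at the step you flag as the ``parallel verification needed when lifting disks from $B$ back to disks for $H$,'' and the phrase ``reversing the isotopy above'' does not fill it. The vertices of $B$ interior to $L$ represent disks for $H/\bigcup\CC_i$ that have no antecedent in $\CC\EE(H)$; there is no isotopy to reverse. One has to prove from scratch that every compressing or edge-compressing disk $D$ for $H/\bigcup\CC_i$ can be isotoped off the scars of $\bigcup\CC_i$ and the remnants of $\bigcup\EE_i$, and this is where the paper's real work is: it introduces the sunburst diagram, proves Claim~\ref{c:CircleImpliesTriangle} (a component of the diagram tracing a full circle of $\bdy H$ must bound a normal-triangle piece of the compressed surface, which no disk in $\CC\EE(H/\bigcup\CC_i)$ can meet), and then uses Lemma~\ref{l:adjacent} to see that the remaining components trace arcs, so $\bdy D$ can be swept around them as in Figures~\ref{f:DIsotopyCompressing} and~\ref{f:DIsotopyEdgeCompressing}. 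Without this the claim that $\eta$ (your $\tilde B$) exists is unsupported.

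Two smaller points. First, your plan to isotope $F$ off $\bigcup\CC_i$ by an innermost-disk argument is unnecessary and a bit off-target: the paper shows that for a vertex $[F]$ of the link, disjointness from $\bigcup\CC_i$ is essentially automatic --- if $F$ is edge-compressing and met some $C\in\CC_i$, then $F$ would have belonged to $\EE_{i+1}$, contradicting fullness of $\sigma$; if $F$ is compressing, it is disjoint from $C_0$ (both push into $\bdy\Delta$) and from each $C=E_1\cup Q\cup E_2$ because $F$ misses $E_1,E_2$ (they span $\sigma$) and $Q\subset\bdy\Delta^1$. Second, the worry you raise about $\bdy F$ becoming inessential on $H/\bigcup\CC_i$ is resolved in the paper not by ``placement near the 1-skeleton'' but by Lemma~\ref{l:NoEmptyTerminals}: if $\bdy F$ bounded a subdisk of $H/\bigcup\CC_i$ containing a scar of some $C\in\CC_i$, then the disk $E\in\EE_i$ guaranteed to meet $C$ would meet $\bdy F$, contradicting $F$'s disjointness from $\bigcup\EE_i$. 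You should incorporate both of these arguments explicitly rather than leaving them as acknowledged hurdles.
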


\begin{proof}
Let $\Sigma$ be the union of simplices in $S$ that contain $\sigma$. We claim $\Sigma - \sigma$ is an essential sphere in $[\CC \EE(H/ \bigcup \CC_i)]$. To establish this, there are a number of things we will have to prove. 

\begin{clm}
Each vertex in $\Sigma - \sigma$ represents a disk in $\CC \EE(H/ \bigcup \CC_i)$. 
\end{clm}

\begin{proof}
Suppose $[D]$ is such a vertex, representing a disk, $D$. We first deal with the case that $D$ is an edge-compressing disk. If such a disk is disjoint from every disk in $\CC_i$, for each $i$, then it will be an edge-compressing disk for $H/\bigcup \CC_i$. However, if $D$ meets some disk $C \in \CC_i$, for some $i$, then $D$ would have belonged to the set $\EE_{i+1}$, and thus $[D]$ would have been a vertex of $\sigma$ (since $\sigma$ is assumed to be full). 

We now deal with the case that $D$ is a compressing disk. We again claim that it must be disjoint from every disk in $\CC_i$, for each $i$. Suppose $C \in \CC_i$ is a disk such that $D \cap C \ne \emptyset$. Recall that $C$ is either the disk $C_0$, or by Lemma \ref{l:adjacent}, $C=E_1 \cup Q \cup E_2$ for some rectangle $Q \subset \Delta^1$ and adjacent disks $E_1 , E_2 \in \EE_{i-1}$. In the former case, the disk $C$ is isotopic into $\bdy \Delta^1$, and is therefore disjoint from every compressing disk such as the disk $D$. In the latter case, we know $D$ is disjoint from $E_1$ and $E_2$, since $[D]$ is a vertex in the link of $\sigma$, and $[E_1]$ and $[E_2]$ are vertices of $\sigma$. Furthermore, $D$ is disjoint from $Q$, as $Q \subset \bdy \Delta^1$. Therefore $D$ is disjoint from $C=E_1 \cup Q \cup E_2$.

We have established that $D$ is disjoint from every disk in $\CC_i$, for all $i$. To show $D$ is a compressing disk for $H/\bigcup \CC_i$, we must now show that $\bdy D$ is essential on $H/\bigcup \CC_i$. If not, then $\bdy D$ bounds a subdisk $D'$ of $H/\bigcup \CC_i$ that contains scars of a disk $C \in \CC_i$, for some $i$.  (See Definition \ref{d:H/D}.) By Lemma \ref{l:NoEmptyTerminals}, there is an edge-compressing disk $E \in \EE_i$ for $H$ that meets $C$. Thus, $E \cap \bdy D \ne \emptyset$, and hence $E \cap D \ne \emptyset$. We have reached a contradiction, as $D$ is disjoint from every disk in $\EE_i$, for all $i$. 
\end{proof}

The previous claim implies that there is a natural map $\phi$ from $S'=\Sigma - \sigma$ into $[\CC \EE(H/ \bigcup \CC_i)]$. We must now show that this map is homotopically non-trivial. If not, then we can extend $\phi$ to a map from a ball $B$ bounded by $S'$ into $[\CC \EE(H/ \bigcup \CC_i)]$. We assume $B$ has been triangulated so that this map is simplicial. 

We now construct a map $\eta:B \to [\CC \EE(H)]$ such that 
\begin{enumerate}
	\item $\eta \circ \phi$ fixes $S'$, and 
	\item each disk represented by a vertex of $\eta(B)$ is disjoint from every disk represented by a vertex of $\sigma$. 
\end{enumerate}

To prove the existence of $\eta$ we must show that for each disk $D \in \CC \EE(H/ \bigcup \CC_i)$ there is a disk $\tilde D \in \CC \EE(H)$, disjoint from every disk represented by a vertex of $\sigma$, that becomes $D$ after compressing. It will follow from this construction that if $D$ and $D'$ are disjoint disks in $\CC \EE(H/ \bigcup \CC_i)$ then $\tilde D$ and $\tilde D'$ can be chosen to be disjoint. Thus, the map $\eta$ that send $D$ to $\tilde D$ will map $B$ simplicially into $[\CC \EE(H)]$.

\begin{figure}
\includegraphics[width=4in]{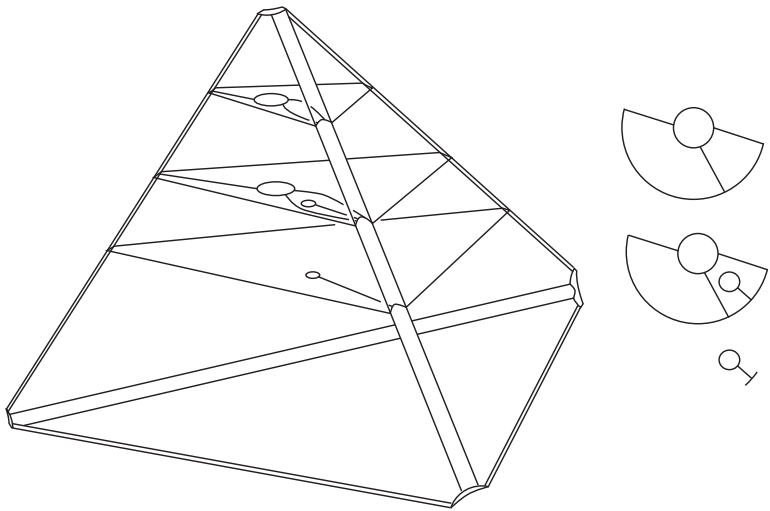}
\caption{The surface $H/\bigcup \CC_i$, when $H$ is the surface of Figure \ref{f:E1}.}
\label{f:Exploded}
\end{figure}

The map $\eta$ will be easily defined once we better understand the relationship between the surfaces $H$ and $H/\bigcup \CC_i$. When $H$ is the surface depicted in Figure \ref{f:E1}, the surface $H/\bigcup \CC_i$ is depicted on the left in Figure \ref{f:Exploded}. The small circles in this figure bound disks that are the scars of the disks in $\CC_0 \cup \CC_1$. The arcs that meets these disks represent the places where the disks of $\EE_0 \cup \EE_1$ met $H$. On the righthand side of this figure we depict the following: The circle at the top depicts the upper scar of $C_0$. The three arcs incident to it represent places where the three disks of $\EE_0$ met $H$. Two pairs of these arcs represent neighboring disks in $\EE_0$, and are thus connected by subarcs of $\bdy H$. Just below this we depict the other scar of $C_0$. Again there are three arcs incident to this scar, also representing places where the three disks of $\EE_0$ met $H$. As in the upper figure, for each pair of neighboring disks in $\EE_0$ there is a subarc of $\bdy H$ depicted joining them. Furthermore, one such pair defines a compressing disk $C_1$ in $\CC_1$. A scar of $C_1$ is depicted by the smaller circle of the middle figure, together with an arc that represents where a disk of $\EE_1$ met $H$. Finally, the lowest figure depicts the other scar of $C_1$, together with the other arc where a disk of $\EE_1$ met $H$.

We call the entire righthand figure a {\it sunburst diagram.} Formally, a sunburst diagram is a depiction of each scar of a disk of $\bigcup \CC_i$, together with arcs representing all of the places where disks of $\bigcup \EE_i$ met $H$, along with subarcs of $\bdy H$ connecting arcs representing neighboring disks of $\bigcup \EE_i$. Note that between any pair of neighboring disks such a subarc of $\bdy H$ must exist, because otherwise such a pair would be non-adjacent. By Lemma \ref{l:adjacent}, there are no such pairs.

\begin{figure}
\includegraphics[width=4in]{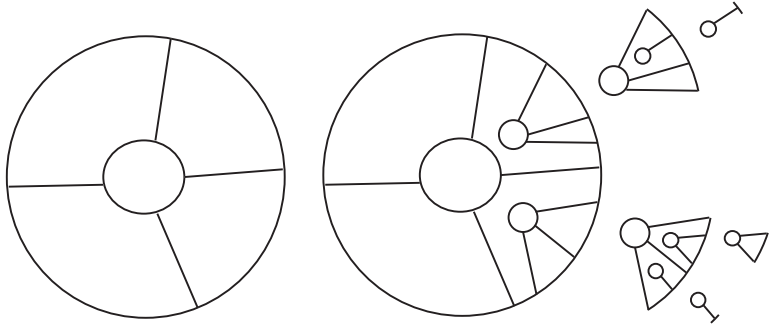}
\caption{A more complicated sunburst diagram.}
\label{f:Diagram}
\end{figure}

Sunburst diagrams can be far more complicated than the one depicted in Figure \ref{f:Exploded}. Knowing what features such a diagram must necessarily possess will be helpful in understanding the rest of the proof. As an illustrative example, consider Figure \ref{f:Diagram}. Such a diagram comes from a situation in which there are four disks in $\EE_0$. At least one of these disks meets each edge of $\Delta$ that is incident to $v$, and hence every pair of consecutive disks are neighbors. By Lemma \ref{l:adjacent}, such neighbors must be adjacent, so there is an arc of $\bdy H$ connecting them. This is why the boundaries of the first and second figure in the diagram are complete circles. In the figure, two neighboring pairs of $\EE_0$ define two compressing disks in $\CC_1$. Each of the disks in $\CC_1$ meets three disks in $\EE_1$. Three pairs of the disks in $\EE_1$ define three compressing disks in $\CC_2$. Two of these meet a single disk in $\EE_2$, and one meets two disks in $\EE_2$. Finally, the only two neighboring disks in $\EE_2$ are parallel on $H$, and therefore do not define any further compressing disks. 

\begin{clm}
\label{c:CircleImpliesTriangle}
The subset of $\bdy H$ represented in a component of a sunburst diagram is connected, and is thus either an arc or a circle. In the former case, the remnants of $\bigcup \EE_i$ depicted in this component are incident to at most two edges of $\Delta$. In the latter case, the depicted circle of $\bdy H$ bounds a component of $H/\bigcup \CC_i$ that is a normal triangle. 
\end{clm}

\begin{proof}
Each arc in a sunburst diagram represents a disk of $\EE_i$ that is incident to one of the three edges of $\Delta$ that meet $v$. If all three edges of $\Delta$ are met, then each pair of consecutive disks of $\EE_i$ represented in the diagram are neighbors. By Lemma \ref{l:adjacent}, such a pair is adjacent, and thus they are connected by an arc of $\bdy H$. As there are three edges of $\Delta$ incident to $v$, these arcs piece together to form a length three normal loop on $\bdy \Delta$. After compressing along the disks of $\bigcup \CC_i$, this loop will bound a normal disk. 

If at most two of the edges of $\Delta$ are met by disks of $\EE_i$ represented in a component of a sunburst diagram, then exactly one pair of consecutive disks of $\EE_i$ are not neighbors. Hence, all of the other pairs are joined by arcs of $\bdy H$ to form a connected arc. 
\end{proof}

We now complete the proof of Theorem \ref{t:TriangleCompress}. Suppose first $D$ is a compressing or edge-compressing disk for $H/\bigcup \CC_i$. As each of the scars of the disks in $\bigcup \CC_i$ is a disk, $\bdy D$ can be isotoped to be disjoint from these. If $\bdy D$ meets any of the remnants of the disks of $\bigcup \EE_i$, then $\bdy D$ will be depicted in some component of a sunburst diagram. The subset of $\bdy H$ depicted in this component can not be a circle, since Claim \ref{c:CircleImpliesTriangle} would then imply that this component represents a normal triangle of $H/\bigcup \CC_i$. However, no compressing or edge-compressing disk for $H/\bigcup \CC_i$ will meet a normal triangle piece of this surface. 

\begin{figure}
\psfrag{D}{$\bdy D$}
\[\includegraphics[width=4in]{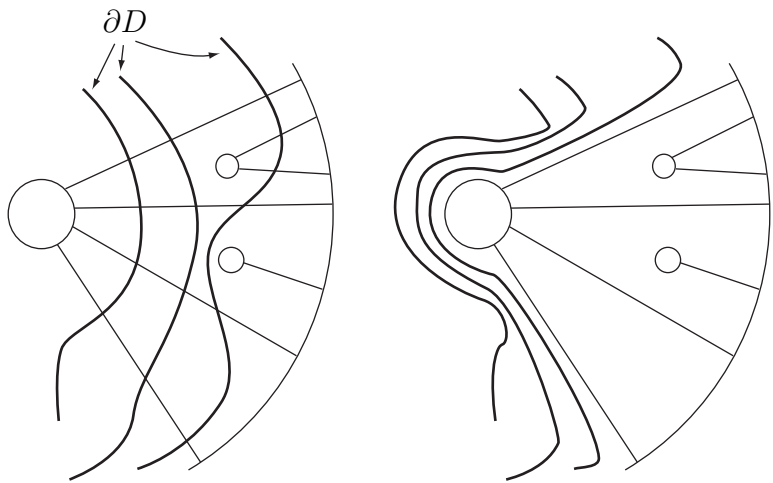}\]
\caption{On the left is a component of a sunburst diagram that meets a compressing disk $D$. Since the depicted subset of $\bdy H$ is a connected arc, $\bdy D$ can be isotoped to go around this component of the sunburst diagram, as in the figure on the right.}
\label{f:DIsotopyCompressing}
\end{figure}

Claim \ref{c:CircleImpliesTriangle} now implies that the subset of $\bdy H$ depicted in any component of the sunburst diagram that meets $\bdy D$ is an arc. Hence, this component looks something like the one depicted in Figure \ref{f:DIsotopyCompressing}. The salient feature of this figure is that there is a (larger) circle representing a scar of some disk $C$ of $\CC_i$ (for some $i$), arcs of $\EE_i$ connecting this circle to a connected subarc of $\bdy H$, and that the entire figure is homeomorphic to a disk. Thus, any intersection of $\bdy D$ with this component can be isotoped off. When $D$ is a compressing disk, such an isotopy is depicted in Figure \ref{f:DIsotopyCompressing}. When $D$ is an edge-compressing disk some additional care must be taken.  The depicted disks of $\EE_i$ are $\{E_j\}_{j=1}^m$, where for each $j$, the disks $E_j$ and $E_{j+1}$ are adjacent neighbors. These disks are all incident to at most two edges of $\Delta$. Hence, we may assume that for some $k \le m$, the disks $\{E_j\}_{j=1}^k$ are all incident to the same edge of $\Delta$. The disk $D$ is then either incident to the same edge as these disks, or $k < m$ and $D$ is incident to the same edge as the disks $\{E_j\}_{j=k+1}^m$. Assume the former. Then $\bdy D$ can be isotoped to be disjoint from every disk in $\{E_j\}_{j=1}^m$ via an isotopy as depicted in Figure \ref{f:DIsotopyEdgeCompressing}.

\begin{figure}
\psfrag{E}{$E_1$}
\psfrag{D}{$\bdy D$}
\[\includegraphics[width=4in]{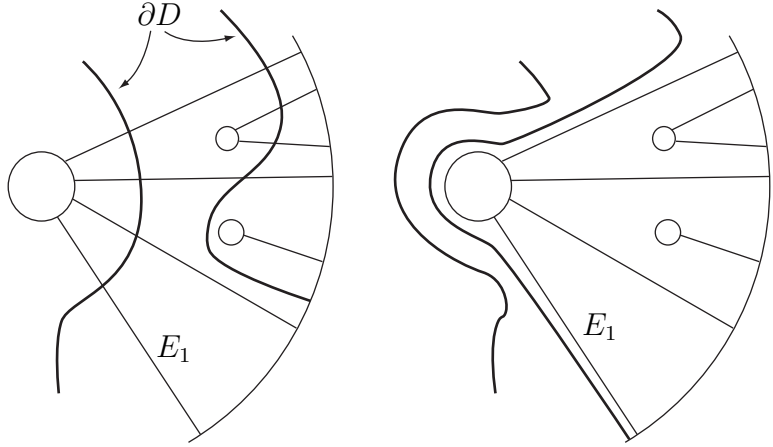}\]
\caption{A component of a sunburst diagram that meets an edge-compressing disk $D$. The disk $E_1$ is incident to the same edge of $\Delta$ as $D$.}
\label{f:DIsotopyEdgeCompressing}
\end{figure}

Once $\bdy D$ has been isotoped to be disjoint from every component of a sunburst diagram, it follows from a standard innermost disk argument that the disk $D$ can be made disjoint from every disk of $\bigcup \CC_i$ and $\bigcup \EE_i$. Thus, $D$ is now the desired compressing or edge-compressing disk $\tilde D$ for $H/\bigcup \CC_i$, and is disjoint from every disk represented by a vertex of $\sigma$. 

\begin{figure}
\psfrag{s}{$\sigma$}
\psfrag{S}{$\Sigma$}
\psfrag{B}{$\eta(B)$}
\[\includegraphics[width=4in]{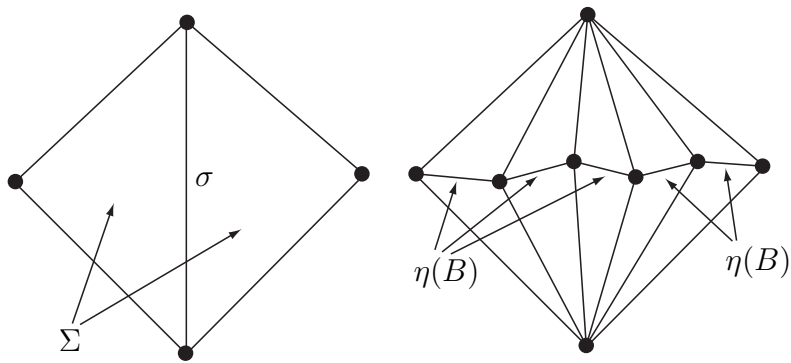}\]
\caption{In the left is the simplex $\sigma$, and the simplices $\Sigma$ containing $\sigma$. On the right is the join of $\eta(B)$ with $\bdy \sigma$.}
\label{f:eta}
\end{figure}

The proof is now complete by observing that the existence of the ball $B$ and map $\eta$ implies that we can modify $S$ by replacing $\Sigma$ with the join of $\eta(B)$ and $\bdy \sigma$, as in Figure \ref{f:eta}.  This will lower the sunburst complexity of $S$, providing the desired contradiction. 
\end{proof}

The following lemma completes the proof of the third and fourth conclusion of Theorem \ref{t:summary}.

\begin{lem}
Suppose $H$ is a normally connected surface in a tetrahedron with local index $n$. Then $H$ is connected, and $|\bdy H| \le 4(n+1)$. 
\end{lem}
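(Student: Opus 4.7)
The plan is to argue by strong induction on the local index $n$. In the base case $n=0$, the disk complex $\plex{\CC\EE(H)}$ is empty, so Theorem~\ref{t:EdgeCompressions} forces $H$ to be a union of disks with normal boundary of length $3$ or $4$; normal connectedness then forces $H$ to be empty, and both conclusions hold vacuously.

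For the inductive step, I would first handle the two cases where $\bdy H$ has uniform-type boundary. If every component of $\bdy H$ has length at least $8$, Lemma~\ref{l:OnlyLongConnected} gives connectedness and Theorem~\ref{t:NoLong} gives $|\bdy H| \le 4(n+1)$. If every component of $\bdy H$ has length $3$ or $4$, Lemma~\ref{l:NoLongConnected} gives connectedness, and then conclusion~(2) of Theorem~\ref{t:summary} (proved by Theorem~\ref{t:EdgeCompressions}) combined with $\chi(H)=2-2g-k$ for a connected surface yields at most $n+1$ boundary loops, whence $|\bdy H| \le 4(n+1)$. The remaining no-length-$3$ subcase mixing length-$4$ and length-$\ge 8$ components should reduce to these arguments: the proof of Theorem~\ref{t:NoLong} only invokes the absence of triangular boundary components, and Lemma~\ref{l:SameComponent} applied to length-$4$ loops parallel to the length-$4$ boundary components supplies the chaining needed for connectedness.

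The main case is when $\bdy H$ has at least one component of length $3$. I would pick such a component $\alpha_0$ innermost around the vertex $v$ it encloses, placing us in the setup of Section~5 with $C_0$ the subdisk of $\bdy\Delta$ bounded by $\alpha_0$. Next, I would choose a sunburst-minimal essential $(n-1)$-sphere $S \subset \plex{\CC\EE(H)}$, a full sunburst $\sigma$ of $S$, and the associated sequences $\{\EE_i\},\{\CC_i\}$. By Theorem~\ref{t:TriangleCompress}, the compressed surface $H' = H/\bigcup\CC_i$ has local index at most $n - \left|\bigcup\EE_i\right| < n$. By Claim~\ref{c:CircleImpliesTriangle} together with the normal-component lemma in Section~\ref{s:TopMinDefinitions}, $H'$ decomposes as a disjoint union of normal triangles (split off by the compressions) and a surface $H^*$ whose normal components have local indices summing to at most $n-\left|\bigcup\EE_i\right|$. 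Applying the inductive hypothesis to each normal component of $H^*$ and reassembling $H$ from $H'$ via the tubes dual to the disks of $\bigcup\CC_i$ then yields both conclusions: the tubes form a chain rooted at $C_0$ through the sunburst that reconnects every piece of $H'$, giving connectedness; and since $\bdy H = \bdy H'$ (compressions are in the interior of $\Delta$), a careful accounting of the inductive bounds on each normal component of $H^*$ together with the length-$3$ contribution from each split-off normal triangle against the index budget $n$ delivers $|\bdy H| \le 4(n+1)$.

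The main obstacle I anticipate is the bookkeeping at the end of the sunburst case. One must verify that each normal component of $H^*$ is itself normally connected so the induction applies, that the split-off normal triangles plus the inductive length bounds genuinely sum to at most $4(n+1)$, and that the tube-reassembly yields a connected surface. Extracting a sharp enough estimate likely requires examining the sunburst-diagram structure of Section~5 more finely than the naive $\sum 4(n_j+1)$ bound, exploiting the fact that each edge-compressing disk in $\bigcup\EE_i$ both lowers the index budget of $H^*$ and constrains the number of normal triangles that can be split off.
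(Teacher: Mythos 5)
Your overall strategy—split into the three boundary-length regimes and use the sunburst machinery of Section~5 in the mixed case—is the same as the paper's, and your handling of the two ``uniform'' cases is correct. The divergence, and the gap, is in the mixed case with a length-$3$ boundary component, where you propose to decompose $H^*=(H/\bigcup\CC_i)\smallsetminus(\text{triangles})$ into normal components $H^*_1,\dots,H^*_k$ and apply strong induction to each. The paper does something different: it \emph{iterates} Theorem~\ref{t:TriangleCompress}, discarding normal triangles at each stage, until it reaches a single surface $H'$ whose boundary components all have length at least~$8$, and then applies Lemma~\ref{l:OnlyLongConnected} and Theorem~\ref{t:NoLong} directly to $H'$. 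Those two results only require well-defined local index and long boundary---\emph{not} normal connectedness---so no decomposition into normal components is ever taken, and the count is clean: $|\bdy H'|\le 4(n'+1)$ with $n'\le n-m$, where $m$ is the total number of length-$3$ boundary loops, giving $|\bdy H|=|\bdy H'|+3m\le 4(n-m+1)+3m\le 4(n+1)$.

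Your route runs into the exact problem you flag at the end. The naive estimate $\sum_j|\bdy H^*_j|\le\sum_j 4(n_j+1)=4\bigl(\sum_j n_j\bigr)+4k$ carries a $4k$ term, and with $\sum n_j\le n-|\bigcup\EE_i|$ and $t\le|\bigcup\CC_i|\le|\bigcup\EE_i|$ triangles discarded you would need something like $4(k-1)\le t$ to close the gap, which is not supplied by anything in Section~5. This is not mere bookkeeping to be tightened; it is the reason the paper avoids normal-component decomposition altogether. A second, related gap is in your connectedness argument: you assert that the tubes dual to $\bigcup\CC_i$ ``reconnect every piece of $H'$,'' but a normal component of $H^*$ need not carry any scar of $\bigcup\CC_i$, so tube-reassembly alone does not force connectedness. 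The paper instead deduces connectedness of $H$ by first proving $H'$ connected (Lemma~\ref{l:OnlyLongConnected}), then observing that compression cannot merge distinct components of $H$ and that every component of $H$ must retain a boundary loop of length at least~$8$ (via Lemma~\ref{l:SameComponent}), so disconnectedness of $H$ would force disconnectedness of $H'$. To repair your proof you should replace the decomposition-plus-induction step with the paper's iterated-compression argument, keeping a single surface throughout.

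One small inaccuracy: in the $n=0$ base case, normal connectedness does not force $H=\emptyset$; a single normal triangle or quadrilateral is normally connected. The paper simply sets that trivial case aside (it obviously satisfies both conclusions), and you should too rather than claim vacuity.
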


\begin{proof}
First, suppose every component of $\bdy H$ has length three or four. By Lemma \ref{l:NoLongConnected}, $H$ is then connected. By the second conclusion of Theorem \ref{t:summary}, $1-\chi(H) \le n$. It follows that $H$ has at most $n+1$ boundary components. By assumption, each boundary component has length at most four, so we have $|\bdy H| \le 4(n+1)$. 

When every component of $\bdy H$ has length at least eight, then $H$ is connected by Lemma \ref{l:OnlyLongConnected} and $|\bdy H| \le 4(n+1)$ by Theorem \ref{t:NoLong}. 

We are left with the case that $\bdy H$ has some components of length at least 8, and some components of length 3. It follows from Lemma \ref{l:SameComponent} that all parallel length 3 components are on the same component of $H$, and each such parallel family is on the same component of $H$ as at least one longer component of $\bdy H$. We will show that all of these longer components of $\bdy H$ are on the same component of $H$, and thus conclude that $H$ must be connected. 

Let $\{\EE_i\}$ and $\{\CC_i\}$ denote the collections of disks of Theorem \ref{t:TriangleCompress}. By Lemma \ref{l:NoEmptyTerminals}, $|\bigcup \EE_i| \ge |\bigcup \CC_i|$. By Theorem \ref{t:TriangleCompress}, the local index of $H/ \bigcup \CC_i$ is at most $n-\left| \bigcup \EE_i \right|$, and thus at most $n-\left| \bigcup \CC_i \right|$. By construction, the set $\CC_0$ contains a compressing disk $C_0$ such that $H/C_0$ contains a component that is a normal triangle. Thus, $H/ \bigcup \CC_i$ contains at least one normal triangle component. Furthermore, the total number $t$ of normal triangle components of $H/ \bigcup \CC_i$ is at most $|\bigcup \CC_i|$. Thus, the local index of $H/ \bigcup \CC_i$ is at most $n-t$. 

We now continue to apply Theorem \ref{t:TriangleCompress} to obtain a sequence of surfaces. Each such surface is obtained from the previous one by some compressions, followed by discarding any resulting normal triangle components. This process concludes with a surface $H'$ with only boundary curves of length at least eight. By the preceding paragraph, the local index $n'$ of $H'$ will be at most $n$ minus the total number of normal triangles lost, which is precisely the number $m$ of length three components of $\bdy H$. That is, $n' \le n-m$. As above, it follows from Lemma \ref{l:OnlyLongConnected} that $H'$ is connected, and from Theorem \ref{t:NoLong} that $|\bdy H'| \le 4(n'+1)$. Thus, $H$ was connected, and 
\begin{eqnarray*}
|\bdy H| &=&|\bdy H'|+3m\\ 
&& \le 4(n'+1) +3m\\ 
&& \le 4(n-m+1)+3m\\
&& \le 4(n+1).
\end{eqnarray*}
\end{proof}

\section{Questions and Conjectures}

Theorem \ref{t:summary} narrows down the possible pictures for a surface with a prescribed local index in a tetrahedron to a finite list. However, as noted in the introduction, this list will almost certainly contain surfaces that do not have the desired local index. Resolving the following questions would help give an exact classification. 

For all of the following questions, $H$ will denote a connected surface with local index $n$ in a tetrahedron $\Delta$. 

\begin{quest}
Can $\bdy H$ have more than one ``long" component?
\end{quest}

Here, a ``long component" is one that meets the 1-skeleton at least eight times. In particular, it does not seem likely that two parallel $n$-gons, connected by a single unknotted tube, will be locally topologically minimal. 

\begin{quest}
Does every long component of $\bdy H$ have to be the boundary of a helicoid?
\end{quest}

By \cite{TopMinNormalII}, if $H$ is a disk, then it is a helicoid. This rules out, for example, certain loops of length 20 from being the boundary of a locally topologically minimal disk. Can such loops arise as boundary components of locally topologically minimal surfaces that are not disks?

A more careful analysis of the proof of Theorem \ref{t:NoLong} reveals that when the boundary of $H$ is not helical, then the bound given in that theorem can be improved to $|\bdy H| \le 2n+4$. As the shortest possible non-helical loop on $\bdy \Delta$ has length 20, we conclude that the lowest possible local index of a surface with non-helical boundary is 8. 

\begin{quest}
If $\bdy H$ contains a triangular component then does the surface obtained by compressing $H$ along an outermost one reduce the index of $H$ by exactly one?
\end{quest}

Theorem \ref{t:TriangleCompress} implies that in this case there will be a collection of compressing disks that will pinch off this triangle, and lower the index by some amount. Can one improve the proof so that this collection always has exactly one element?

\begin{quest}
Can one determine the local index of $H$, just from knowing its Euler characteristic and total boundary length?
\end{quest}

We would like to think of local index as a commodity. Theorem \ref{t:summary} says that you can pay for this commodity with either negative Euler characteristic or boundary length. This question asks whether or not there is an exchange rate between these two currencies.

\bibliographystyle{alpha}
%\bibliography{master}

\begin{thebibliography}{Rub95b}

\bibitem[Baca]{TopMinNormalI}
D.~Bachman.
\newblock Normalizing {T}opologically {M}inimal {S}urfaces {I}: {G}lobal to
  {L}ocal {I}ndex.
\newblock Preprint.

\bibitem[Bacb]{TopMinNormalII}
D.~Bachman.
\newblock Normalizing {T}opologically {M}inimal {S}urfaces {I}{I}: {D}isks.
\newblock Preprint.

\bibitem[Bac04]{finite}
David Bachman.
\newblock A note on {K}neser-{H}aken finiteness.
\newblock {\em Proc. Amer. Math. Soc.}, 132(3):899--902 (electronic), 2004.

\bibitem[Bac10]{TopIndexI}
D.~Bachman.
\newblock Topological {I}ndex {T}heory for surfaces in 3-manifolds.
\newblock {\em Geometry \& Topology}, 14(1):585--609, 2010.

\bibitem[Bac13]{Stabilizing}
David Bachman.
\newblock Stabilizing and destabilizing {H}eegaard splittings of sufficiently
  complicated 3-manifolds.
\newblock {\em Math. Ann.}, 355(2):697--728, 2013.

\bibitem[BDTS]{JSJ}
D.~Bachman, R.~Derby-Talbot, and E.~Sedgwick.
\newblock Heegaard structure respects complicated {J}{S}{J} decompositions.
\newblock Available at {\tt http://arxiv.org/abs/0911.5078}.

\bibitem[CG87]{cg:87}
A.~J. Casson and C.~McA. Gordon.
\newblock Reducing {H}eegaard splittings.
\newblock {\em Topology and its Applications}, 27:275--283, 1987.

\bibitem[CM04a]{cm1}
Tobias~H. Colding and William~P. Minicozzi, II.
\newblock The space of embedded minimal surfaces of fixed genus in a
  3-manifold. {I}. {E}stimates off the axis for disks.
\newblock {\em Ann. of Math. (2)}, 160(1):27--68, 2004.

\bibitem[CM04b]{cm2}
Tobias~H. Colding and William~P. Minicozzi, II.
\newblock The space of embedded minimal surfaces of fixed genus in a
  3-manifold. {II}. {M}ulti-valued graphs in disks.
\newblock {\em Ann. of Math. (2)}, 160(1):69--92, 2004.

\bibitem[CM04c]{cm3}
Tobias~H. Colding and William~P. Minicozzi, II.
\newblock The space of embedded minimal surfaces of fixed genus in a
  3-manifold. {III}. {P}lanar domains.
\newblock {\em Ann. of Math. (2)}, 160(2):523--572, 2004.

\bibitem[CM04d]{cm4}
Tobias~H. Colding and William~P. Minicozzi, II.
\newblock The space of embedded minimal surfaces of fixed genus in a
  3-manifold. {IV}. {L}ocally simply connected.
\newblock {\em Ann. of Math. (2)}, 160(2):573--615, 2004.

\bibitem[Hak61]{haken:61}
W.~Haken.
\newblock Theorie der {N}orm\"alflachen.
\newblock {\em Acta Math.}, 105:245--375, 1961.

\bibitem[Kne29]{kneser:29}
H.~Kneser.
\newblock Geschlossene {F}l\"achen in driedimensionalen {M}annigfaltigkeiten.
\newblock {\em Jahresbericht der Dent. Math. Verein}, 28:248--260, 1929.

\bibitem[Li07]{li:07}
Tao Li.
\newblock Heegaard surfaces and measured laminations. {I}. {T}he {W}aldhausen
  conjecture.
\newblock {\em Invent. Math.}, 167(1):135--177, 2007.

\bibitem[Rub95a]{rubinstein:95}
J.~H. Rubinstein.
\newblock An algorithm to recognise the $3$-sphere.
\newblock In {\em Proceedings of the International Congress of Mathematicians,
  Vol. ~1,~2 ({Z}\"urich, 1994)}, pages 601--611. Birkh\"auser, Basel, 1995.

\bibitem[Rub95b]{rubinstein:93}
J.~H. Rubinstein.
\newblock Polyhedral minimal surfaces, {H}eegaard splittings and decision
  problems for 3-dimensional manifolds.
\newblock In {\em Proceedings of the Georgia Topology Conference}, pages 1--20,
  1995.

\bibitem[Sto00]{stocking:96}
M.~Stocking.
\newblock Almost normal surfaces in 3-manifolds.
\newblock {\em Trans. Amer. Math. Soc.}, 352:171--207, 2000.

\bibitem[Tho94]{thompson:94}
A.~Thompson.
\newblock Thin position and the recognition problem for the 3-sphere.
\newblock {\em Math. Research Letters}, 1:613--630, 1994.

\end{thebibliography}

\end{document}